\documentclass[12pt, a4paper]{article}

\usepackage{a4}
\usepackage{latexsym}
\usepackage{color}
\usepackage{amssymb}
\usepackage{amsfonts}
\usepackage{amsthm}
\usepackage{amsmath}
\usepackage[pdftex]{graphicx}
\usepackage{comment}

\usepackage[a4paper,margin=2cm,vmargin={1cm,2cm},includeheadfoot]{geometry}

\newcommand{\eps}{{\varepsilon}}
\newcommand{\E}{{\mathbb{E}}}
\newcommand{\prob}{{\mathbb{P}}}

\newcommand{\convDistrL}{\stackrel{d}{\longrightarrow}}
\newcommand{\eqDistr}{\stackrel{d}{=}}

\newcommand{\convProbL}{\stackrel{\mathbb{P}}{\longrightarrow}}
\newcommand{\convAs}{\stackrel{\mathrm{a.s.}}{\rightarrow}}
\newcommand{\convAsL}{\stackrel{\mathrm{a.s.}}{\longrightarrow}}

\newcommand{\ie}{i.e. }

\newtheorem{theorem}{Theorem}[section]

\newtheorem{proposition}[theorem]{Proposition}
\newtheorem{lemma}[theorem]{Lemma}

\newtheorem{remark}[theorem]{Remark}

\begin{document}
\title{Applying the Wiener-Hopf Monte Carlo simulation technique for L\'evy processes to path functionals}
\author{Albert Ferreiro-Castilla\footnote{
Direcci\'o D'Inversions en Accions,
Banc Sabadell,
Carrer del Sena, 12,
Sant Cugat del Vall\`{e}s 08174, Spain. E-mail: aferreiro.c@gmail.com (Partially supported by a Royal Society Newton International Fellowship).} \quad and \quad Kees van Schaik\footnote{Corresponding Author. School of Mathematics, University of Manchester, Oxford Road, Manchester M13 9PL, United Kingdom. E-mail: kees.vanschaik@manchester.ac.uk}}
\date{\today}
\maketitle

\begin{abstract}
In this note we apply the recently established Wiener-Hopf Monte Carlo (WHMC) simulation technique for L\'evy processes from Kuznetsov et al. \cite{Kuznetsov10c} to path functionals, in particular first passage times, overshoots, undershoots and the last maximum before the passage time. Such functionals have many applications, for instance in finance (the pricing of exotic options in a L\'evy model) and insurance (ruin time, debt at ruin and related quantities for a L\'evy insurance risk process). The technique works for any L\'evy process whose running infimum and supremum evaluated at an independent exponential time allow sampling from. This includes classic examples such as stable processes, subclasses of spectrally one sided L\'evy processes and large new families such as meromorphic L\'evy processes. 
Finally we present some examples. A particular aspect that is illustrated is that the WHMC simulation technique (provided it applies) performs much better at approximating first passage times than a `plain' Monte Carlo simulation technique based on sampling increments of the L\'evy process.
\end{abstract}

\noindent
{\footnotesize Keywords: Wiener-Hopf decomposition, Monte Carlo simulation, multilevel Monte Carlo, L{\'{e}}vy processes, exotic option pricing, first passage time, overshoot, insurance risk process}


\noindent
{\footnotesize Mathematics Subject Classification (2000): 65C05, 68U20, 60G51}


\section{Introduction}\label{sec_intro}
\setcounter{equation}{0}

Let $X:=(X_t)_{t \geq 0}$ be a L\'evy process, i.e. a (real valued) stochastic process starting from $0$ with cadlag paths (right continuous and with left limits) and stationary, independent increments whose law we denote by $\mathbb{P}$. A L\'evy process may be thought of as a Brownian motion with drift to which an (infinite) sequence of independent compound Poisson processes are added; infinite to the extent that its small jumps may not be summable. The L\'evy-Khintchine formula entails that the characteristic exponent $\Psi$, defined as $\mathbb{E}[e^{\mathrm{i}zX_t}] = e^{-t \Psi(z)}$ for all $t \geq 0$ and $z \in \mathbb{R}$, can be expressed as
\begin{equation}\label{characExpo} 
\Psi(z) = \frac{\sigma^2}{2} z^2 + \mathrm{i}az + \int_{\mathbb{R} \setminus \{ 0 \}} (1-e^{\mathrm{i}zx} + \mathbf{1}_{\{ |x|<1 \}} \mathrm{i}zx) \Pi(\mathrm{d}x)\ , 
\end{equation}
where $\sigma, a \in \mathbb{R}$ and $\Pi$ a measure on $\mathbb{R} \setminus \{ 0 \}$ satisfying $\int_{\mathbb{R} \setminus \{ 0 \}} (x^2 \wedge 1) \Pi(\mathrm{d}x)<\infty$. See e.g. the textbooks Bertoin \cite{Bertoin96}, Kyprianou \cite{Kyprianou06} or Sato\cite{Sato99} for a detailed introduction. 

Of interest in several fields are quantities of the form
\begin{equation}\label{main1} 
\mathbb{E} \left[ f(\tau_u,X_{\tau_u}-u,u-X_{\tau_u-},u-\overline{X}_{\tau_u-}) \right],
\end{equation}
where $\tau_u$ is the first passage time of $X$ over a level $u>0$, i.e.
\[ \tau_u := \inf \{ t>0 \, | \, X_t>u \}\ , \]
$X_{\tau_u}-u$ is referred to as the overshoot, $u-X_{\tau_u-}$ the undershoot and finally $u-\overline{X}_{\tau_u-}$ the last maximum before first passage. Here and throughout we employ the usual notation $\overline{X}_t := \sup_{s \leq t} X_s$ and $\underline{X}_t := \inf_{s \leq t} X_s$ for all $t \geq 0$. 

Some examples of applications of (\ref{main1}) are as follows. In mathematical finance models driven by L\'evy processes are popular extensions of the classic Black \& Scholes model used for pricing financial products (see e.g. Cont and Tankov \cite{Cont04}, Schoutens \cite{Schoutens03} or Schoutens and Cariboni \cite{Schoutens09}). In such models it is assumed that the financial index on which the payoff of the option is based evolves as $S_t=\exp(X_t)$ for a suitably chosen L\'evy process $X$. The `fair' price of a perpetual American option in such a model is typically determined by the joint law of the first hitting time and the overshoot. Furthermore so-called barrier options are popular tools in practice. The `vanilla' version grants the holder a payoff $g(X_T)$ at a future time $T$ provided $X$ has not or has, dependent on the variation of the product, crossed some barrier $B$ in the meantime. Hence the payoff is a function of $(X_T,\overline{X}_T)$. The original WHMC simulation technique deals with this pair, see also further below. A large variety of more `exotic' versions of such options are popular as well. For instance a certain rebate may be paid at the moment the crossing of the barrier happens. Discrete barrier options exist where the barrier crossing event is only observed at certain subperiods of $[0,T]$, and Parisian barrier options where the barrier condition kicks in only once $X$ has spent at least a given period of time on the `wrong' side of the barrier.

Furthermore in actuarial science a so-called L\'evy insurance risk process is a popular extension of the classic Cram\'er-Lundberg model (cf. Lundberg \cite{Lundberg03}) to study the evolution of the cumulative net of premiums minus claims generated by a homogenuous portfolio of insurance products. See e.g. Asmussen \cite{Asmussen00}, Kluppelberg et al. \cite{Kluppelberg04} or Song and Vondra{\v{c}}ek \cite{Song08}. If the L\'evy insurance risk process is $-X$ and the initial capital $u$ then $\tau_u$ corresponds to the time ruin occurs, i.e. the first time the cumulative net becomes negative. Furthermore $-(X_{\tau_u}-u)$ corresponds to the debt at ruin, $X_{\tau_u}-u$ and $u-X_{\tau_u-}$ give information about the nature of the ruin event --- whether the direct cause is a single large claim or rather the accumulation of many small claims --- and finally $u-\overline{X}_{\tau_u-}$ gives information about how close ruin has been before the actual event. 

These examples illustrate several usecases of (\ref{main1}). To the best of our knowledge, evaluating a quantity like (\ref{main1}) can currently be done using one of two other approaches. We will discuss these alternatives and how they compare with the WHMC simulation method below. 

The first one is a `plain' Monte Carlo simulation method, that is, simulate paths of a random walk whose increments have the same law as $X_h$ for some small $h>0$ as an approximation of the paths of $X$. However, there are only few examples of L\'evy processes $X$ for which the law of $X_h$ is known, in other cases it has to be approximated, typically by a numerical Fourier inversion. This introduces numerical inaccuracy and an extra potentially expensive computation step. Another downside is the well known problem that the empirical law of the simulated first passage time suffers from a very significant bias which vanishes only very slowly as $h$ vanishes, due to the fact that the random walk approach misses excursions over the level $u$ between the grid points. See Broadie et al. \cite{Broadie99} and the references therein. The WHMC simulation method does not suffer from this bias, due to the fact that the method simulates the path of the pair $(X,\overline{X})$ rather than just $X$. See also Subsection \ref{subsec_BM}. Even though this bias is --- to our best knowledge --- only documented for Brownian motion, since any L\'evy process can be decomposed as an independent sum where one of the components is a Brownian motion, there is no reason to expect this issue to be any less significant for a more general L\'evy process or the rest of the quantities involved in (\ref{main1}). 

It should be mentioned that some prominent examples of L\'evy processes used in finance do not allow to apply our method directly. For example, Merton's jump-diffusion model (see Merton \cite{Merton76}) where the driving L\'evy process $X$ is a drifted Brownian motion plus a compound Poisson process with normally distributed jumps. As the Wiener-Hopf factors are not explicitly known in this case, the WHMC cannot be applied while plain Monte Carlo does. The same applies to the popular CGMY (see Carr et al \cite{Carr02}) and NIG (see Barndorff-Nielsen \cite{Barndorff-Nielsen98}) models; however due to the unavailability of the exact law of $X_h$ for a given $h>0$ additional simulation techniques are necessary (see e.g. Chen et al \cite{Chen12} and Glasserman and Liu \cite{Glasserman10}) and an extra error is incorporated to the plain Monte Carlo method. It is worth to mention with respect to the latest examples that a parametrisation of the $\beta$-family (a particular subclass of L\'evy processes for which our methodology is straightforward, see Subsection \ref{beta_family}) is such that the CGMY and NIG processes can be obtained as a limit of $\beta$-processes (see Section 4 in Kuznetsov \cite{Kuznetsov09}). For such cases a dedicated study would be helpful to decide whether or not the advantages the WHMC simulation method has over plain Monte Carlo as described in the previous paragraph outweighs the disadavantage that the WHMC method only applies to an approximation of the actual driving L\'evy process (cf. Ferreiro-Castilla and Schoutens \cite{FS11} or Schoutens and van Damme \cite{SD10} for some results in this direction).

The second approach concerns using the quintuple law from Doney and Kyprianou \cite{Doney06}, which can be written in the form
\begin{multline}\label{GS} 
\mathbb{E} \left[ e^{-q\tau_u} \mathbf{1}_{\{ X_{\tau_u}-u \in \mathrm{d}x, \, u-X_{\tau_u-} \in \mathrm{d}y, \, u-\overline{X}_{\tau_u-} \in \mathrm{d}z \}} \right] \\
= \frac{1}{q} \mathbb{P} \left( \overline{X}_{\mathrm{e}(q)} \in u-\mathrm{d}z \right) \mathbb{P} \left( -\underline{X}_{\mathrm{e}(q)} \in \mathrm{d}y-z \right) \Pi(\mathrm{d}x+y) 
\end{multline}
for $q,x,y>0$ and $z \in [0,u \vee y]$. Here and throughout $\mathrm{e}(q)$ denotes an exponentially distributed random variable with mean $1/q$, independent of $X$. Hence, if we know the laws of $\overline{X}_{\mathrm{e}(q)}$ and $\underline{X}_{\mathrm{e}(q)}$ --- which is exactly the condition under which our simulation method can be implemented --- we might use this result to compute (\ref{main1}). However, to obtain the law of $\tau_u$ from the above expression we would need to invert the right hand side over $q$ which is in general not a very straightforward operation, see for instance Section \ref{sec_examples} for how the laws of $\overline{X}_{\mathrm{e}(q)}$ and $\underline{X}_{\mathrm{e}(q)}$ depend on $q$ when $X$ is a meromorphic L\'evy process. Furthermore, obtaining (\ref{main1}) still requires computing a potentially four dimensional integral, for which in general a numerical method will be required. Since the simulation method we propose in this note is a straightforward, easy to implement and efficient method for approximating (\ref{main1}) directly, with only the laws of $\overline{X}_{\mathrm{e}(q)}$ and $\underline{X}_{\mathrm{e}(q)}$ as input, in general there seems no reason not to prefer it over the quintuple law alternative.

To return to the focus of this note, in Kuznetsov et al. \cite{Kuznetsov10c} the Wiener-Hopf Monte Carlo (WHMC) simulating technique is introduced which allows to sample from a law that is a good approximation of the law of $(X_T,\overline{X}_T)$, provided that samples can be produced from $\overline{X}_{\mathrm{e}(q)}$ and $\underline{X}_{\mathrm{e}(q)}$. This method was extended to a multilevel version and a theoretical analysis was given in Ferreiro-Castilla et al. \cite{FKSS12}. In this note we pursue the observation that the main idea behind the WHMC simulating technique can also be used to generate samples from (an approximation of)

\begin{equation}\label{tuple}
(\tau_u,X_{\tau_u}-u,u-X_{\tau_u-},u-\overline{X}_{\tau_u-})
\end{equation}
rather than only (an approximation of) $(X_T,\overline{X}_T)$. In fact, not only (\ref{tuple}) but any functional of the pair $(X,\overline{X})$ could be handled by the method, cf. Remark \ref{rem1}. Once this observation has been established it is simply a matter of applying the usual setup: generate a large number of such samples, apply the function $f$ to each of them and compute the resulting average to obtain an approximation of (\ref{main1}).

For simplicity, in the sequel we will refer to (\ref{tuple}) as \emph{the $4$-tuple}. 

The rest of this note is organized as follows. In Section \ref{WH_scheme} we recall the original WHMC simulating technique and discuss how the underlying idea is useful for path functionals as well. Section \ref{sec_proofs} is devoted to the main result, describing how to obtain an approximation of (\ref{tuple}) in terms of $\overline{X}_{\mathrm{e}(q)}$ and $\underline{X}_{\mathrm{e}(q)}$, and the convergence in distribution of this approximation to the exact law of (\ref{tuple}). In Section \ref{first_passage_sec} we explore the convergence rate of the approximation of the first passage time, as this is the key quantity involved in (\ref{main1}). We will show that \mbox{$\E[(\widehat{\tau}_{u}^{n}-\tau_{u})^{2}]=\mathcal{O}(n^{-1})$}, where $\widehat{\tau}_{u}^{n}$ is the approximation of $\tau_{u}$ as given by the extended WHMC simulating technique. We will also show that the extended WHMC simulating technique admits a multilevel version to approximate $\tau_{u}$ and that such enhancement makes the algorithm optimal. Finally, the implementation, some examples and some numerical results supporting the theoretical claims and exposing the practical side of the extended WHMC simulating technique are collected in Section \ref{sec_examples}. 

\section{The WHMC simulating technique}\label{WH_scheme}
\setcounter{equation}{0}

Let us shortly recall the WHMC simulating technique as introduced in Kuznetsov et al. \cite{Kuznetsov10c} and discuss how that setup can be used for path functionals as well. Fix some $t>0$. The idea in Kuznetsov et al. \cite{Kuznetsov10c} is to construct an approximation of the joint law of $(X_t,\overline{X}_t)$ making use of a 'stochastic grid' on the time axis together with the Wiener-Hopf factorisation as follows. Recall that $\mathrm{e}(\lambda)$ denotes an exponentially distributed random variable with mean $1/\lambda$ independent of $X$. For any $n \geq 1$, enlarge the probability space on which $X$ lives with an i.i.d. sequence $\{\mathrm{e}_i(n/t)\}_{i \geq 1}$ and define a set of grid points as
\begin{equation}\label{def_grid} 
g(0,n/t):=0, \quad g(k,n/t) := \sum_{i=1}^k \mathrm{e}_i(n/t) \quad \mbox{for $k \geq 1$}. 
\end{equation}
For any $n$ the set of random points $\{ 0=g(0,n/t)<g(1,n/t)<... \}$ forms a grid on the time axis, the distance between the grid points forming a sequence of i.i.d. exponentially distributed random variables; equivalently the grid points can be seen as the arrival times of a Poisson process with rate $n/t$. For convenience we still denote the law of $X$ and the grid by $\mathbb{P}$. The idea of using such a 'stochastic grid' was first coined in Carr \cite{Carr98} in the context of finding the value of American put options in a Black \& Scholes model. 

The usefulness of this setup to sample from an approximation of $(X_t,\overline{X}_t)$ relies on the following three facts. 

Firstly, the celebrated Wiener-Hopf factorisation 
tells us that for any L\'evy process $X$ and $q>0$ we have
\begin{equation}\label{WH} 
(X_{\mathrm{e}(q)},\overline{X}_{\mathrm{e}(q)}) \eqDistr (\overline{X}_{\mathrm{e}(q)} + \underline{X}_{\mathrm{e}(q)},\overline{X}_{\mathrm{e}(q)}). 
\end{equation}

Secondly, making use of stationary independent increments of $X$, it can be shown that the above equality can be extended in the sense that we have for any $n$
\[ (X_{g(n,n/t)},\overline{X}_{g(n,n/t)}) \eqDistr (V_n^{(n/t)},J_n^{(n/t)}) \]
where $V_n^{(n/t)}$ and $J_n^{(n/t)}$ are random variables on a new probability space whose law can be expressed in terms of the laws of  $\overline{X}_{\mathrm{e}(n/t)}$ and $\underline{X}_{\mathrm{e}(n/t)}$ in a straightforward way. See Theorem \ref{thm_sims_Kuzn} below.
\begin{theorem}[{Kuznetsov et al. \cite[Theorem 1]{Kuznetsov10c}}]\label{thm_sims_Kuzn} Suppose $\lambda>0$. Let $S_{0}^{(\lambda)}=I_{0}^{(\lambda)}:=0$ and let $(S_{i}^{(\lambda)})_{i \geq 1}$ (resp. $(I_{i}^{(\lambda)})_{i \geq 1}$) be a sequence of i.i.d. random variables with common law equal to the law of $\overline{X}_{\mathrm{e}(\lambda)}$ (resp. $\underline{X}_{\mathrm{e}(\lambda)}$). Then we have for any $n \in \mathbb{N}$
\[ (X_{g(n,\lambda)},\overline{X}_{g(n,\lambda)}) \eqDistr (V_n^{(\lambda)},J_n^{(\lambda)}) \]
where $V_n^{(\lambda)}$ and $J_n^{(\lambda)}$ are iteratively defined by $V_0^{(\lambda)}=J_0^{(\lambda)}=0$ and for $i \geq 1$:
\[ V_i^{(\lambda)} = V_{i-1}^{(\lambda)} + S_{i}^{(\lambda)} + I_{i}^{(\lambda)} \quad \mbox{and} \quad J_i^{(\lambda)} =\max \left\{ J_{i-1}^{(\lambda)}, V_{i-1}^{(\lambda)} + S_{i}^{(\lambda)} \right\}. \]
\end{theorem}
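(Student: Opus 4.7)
My plan is to prove the theorem by induction on $n$, using the Wiener-Hopf identity (\ref{WH}) combined with the stationary independent increments of $X$ and the independence of the grid sequence $\{e_i(\lambda)\}$ from $X$.

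The base case $n=0$ is immediate since $g(0,\lambda)=0$ and both sides equal $(0,0)$. For the inductive step, assume the distributional identity holds at level $n-1$. Writing $T:=g(n-1,\lambda)$ and $e:=\mathrm{e}_n(\lambda)$, so that $g(n,\lambda)=T+e$, I would decompose
\[ X_{g(n,\lambda)} = X_T + (X_{T+e}-X_T), \qquad \overline{X}_{g(n,\lambda)} = \max\Bigl\{\overline{X}_T,\; X_T + \sup_{0\leq s\leq e}(X_{T+s}-X_T)\Bigr\}. \]
The first main step is to argue that the increment pair $\bigl(X_{T+e}-X_T,\;\sup_{0\leq s\leq e}(X_{T+s}-X_T)\bigr)$ is independent of $(X_T,\overline{X}_T)$ and distributed as $(X_{\mathrm{e}(\lambda)},\overline{X}_{\mathrm{e}(\lambda)})$. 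This follows by enlarging the filtration to include $\sigma(\mathrm{e}_1,\mathrm{e}_2,\dots)$, so that $T$ is a stopping time independent of $e$; the strong Markov property of $X$ (or equivalently a direct conditioning argument using stationary independent increments together with independence of the grid from $X$) then gives that the post-$T$ shifted process $(X_{T+s}-X_T)_{s\geq 0}$ is an independent copy of $X$, and evaluating at the independent exponential time $e$ yields the claim.

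Next, I would invoke the Wiener-Hopf factorisation (\ref{WH}) to replace this increment pair in distribution by $(S+I,\,S)$, where $S\eqDistr\overline{X}_{\mathrm{e}(\lambda)}$ and $I\eqDistr\underline{X}_{\mathrm{e}(\lambda)}$ are independent and jointly independent of $(X_T,\overline{X}_T)$; call these $S_n^{(\lambda)}$ and $I_n^{(\lambda)}$. Substituting into the decomposition above,
\[ X_{g(n,\lambda)} \eqDistr X_T + S_n^{(\lambda)} + I_n^{(\lambda)}, \qquad \overline{X}_{g(n,\lambda)} \eqDistr \max\{\overline{X}_T,\; X_T + S_n^{(\lambda)}\}, \]
jointly. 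Finally, combining this with the induction hypothesis $(X_T,\overline{X}_T)\eqDistr(V_{n-1}^{(\lambda)},J_{n-1}^{(\lambda)})$ and the fact that $(S_n^{(\lambda)},I_n^{(\lambda)})$ is independent of $(V_{n-1}^{(\lambda)},J_{n-1}^{(\lambda)})$ (both constructions use fresh exponential increments), I obtain the recursive definitions of $V_n^{(\lambda)}$ and $J_n^{(\lambda)}$, completing the induction.

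The only genuine subtlety I anticipate is the first step, namely justifying the joint distributional identification of the increment pair with $(X_{\mathrm{e}(\lambda)},\overline{X}_{\mathrm{e}(\lambda)})$ together with its independence from $(X_T,\overline{X}_T)$. Everything else is then bookkeeping: one must carry the joint law through each step rather than marginal laws, since the recursion for $J_n^{(\lambda)}$ depends on $V_{n-1}^{(\lambda)}+S_n^{(\lambda)}$ and hence on the coupling between the new supremum increment and the previous position.
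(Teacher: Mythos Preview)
Your proposal is correct and follows essentially the same approach as the paper. The paper does not give a separate proof of Theorem~\ref{thm_sims_Kuzn} (it is cited from \cite{Kuznetsov10c}), but it proves the stronger Proposition~\ref{random_walk_approx} by exactly your induction-plus-Wiener-Hopf argument: split the path at the previous grid point, replace the post-$g(n-1,\lambda)$ piece by an independent copy $Y$ of $X$ via stationary independent increments, apply (\ref{WH}) to $(Y_{\mathrm{e}(\lambda)},\overline{Y}_{\mathrm{e}(\lambda)})$, and invoke the induction hypothesis; the only cosmetic difference is that the paper starts the induction at $k=1$ rather than $n=0$.
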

\noindent Since $g(n,n/t) \convAs t$ as $n \to \infty$ by the law of large numbers and since $X$ does not jump at fixed times, we have $(X_{g(n,n/t)},\overline{X}_{g(n,n/t)}) \convAs (X_t,\overline{X}_t)$ as $n \to \infty$ and hence for large $n$ the law of $(V_n^{(n/t)},J_n^{(n/t)})$ provides an approximation of the joint law of $(X_t,\overline{X}_t)$. 

Thirdly and finally, especially in recent years many families of L\'evy processes have emerged for which the laws of both $\overline{X}_{\mathrm{e}(n/t)}$ and $\underline{X}_{\mathrm{e}(n/t)}$ are known in explicit enough form to allow sampling from. Besides classical examples such as stable processes, we can mention the class of spectrally one sided L\'evy processes for which the so-called scale functions are explicit enough (see e.g. Hubalek and Kyprianou \cite{Hubalek10}, Kyprianou \cite{Kyprianou06} or Kyprianou et al. \cite{Kyprianou10}), meromorphic L\'evy processes (cf. Kuznetsov et al. \cite{Kuznetsov10}) and Vigon's \cite{Vigon02} technique for constructing new L\'evy processes from prespecified ladder height processes (and hence, through the Wiener-Hopf factorization, with prespecified laws for $\overline{X}_{\mathrm{e}(n/t)}$ and $\underline{X}_{\mathrm{e}(n/t)}$).

Now let us discuss how the above setup is useful for producing samples from the 4-tuple as well. The idea is quite straightforward: the 'stochastic grid' as defined in (\ref{def_grid}) does not only satisfy $g(n,n/t) \convAs t$ as $n \to \infty$, but for any sequence $k(n)$ such that $k(n) \in \{0,\ldots,n\}$ and $k(n)t/n \convAs s \in [0,t]$ as $n \to \infty$, we have again by the law of large numbers $g(k(n),n/t) \convAs s$. Consequently, as above, for large $n$ the law of $(V_{k(n)}^{(n/t)},J_{k(n)}^{(n/t)})$ provides an approximation of the law of $(X_s,\overline{X}_s)$. In this sense the 'stochastic grid' becomes dense in the interval $[0,t]$. Furthermore, as is obvious from Theorem \ref{thm_sims_Kuzn}, due to the iterative nature of the definitions of $V$ and $J$, obtaining a sample from the pair $(V_{n}^{(n/t)},J_{n}^{(n/t)})$ requires producing a sample from the vector $((V_{0}^{(n/t)},J_{0}^{(n/t)}),\ldots,(V_{n}^{(n/t)},J_{n}^{(n/t)}))$. Hence constructing the approximative law of $(X_t,\overline{X}_t)$ automatically yields an approximative law of the vector $((X_0,\overline{X}_0),(X_{1/n},\overline{X}_{1/n}),\ldots,(X_{t},\overline{X}_{t}))$ --- see Proposition \ref{random_walk_approx} below --- and it is therefore at least intuitively clear that we should also be able to approximate a quantity like (\ref{main1}). This is made rigorous in Theorem \ref{thm_sims}. The difficulty to be overcome is that convergence on the 'stochastic grid' is less obvious than on a traditional deterministic grid.

\begin{proposition}\label{random_walk_approx}
Let $X$ be a L\'evy process, $\lambda>0$ and recall $V$ and $J$ as defined in Theorem \ref{thm_sims_Kuzn} and the definition of the stochastic grid in (\ref{def_grid}). Then
\begin{equation}\label{4dec1} 
\left( (X_{g(0,\lambda)},\overline{X}_{g(0,\lambda)}), \ldots, (X_{g(k,\lambda)},\overline{X}_{g(k,\lambda)}) \right) \eqDistr \left(  (V_0^{(\lambda)},J_0^{(\lambda)}), \ldots, (V_k^{(\lambda)},J_k^{(\lambda)}) \right)\ . 
\end{equation}
\end{proposition}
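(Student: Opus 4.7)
My plan is to prove (\ref{4dec1}) by induction on $k$, exhibiting both sides as Markov chains on $\mathbb{R}^{2}$ starting at $(0,0)$ and driven by the same i.i.d. increments. Set $T_{i}:=g(i,\lambda)$ and let $\mathcal{F}_{i}$ denote the $\sigma$-algebra generated by $X$ up to time $T_{i}$ together with $\mathrm{e}_{1}(\lambda),\ldots,\mathrm{e}_{i}(\lambda)$. The base case $k=0$ is trivial. For the inductive step, the crux is to show that, conditionally on $\mathcal{F}_{i-1}$, the update from $(X_{T_{i-1}},\overline{X}_{T_{i-1}})$ to $(X_{T_{i}},\overline{X}_{T_{i}})$ has the same transition kernel as the step from $(V_{i-1}^{(\lambda)},J_{i-1}^{(\lambda)})$ to $(V_{i}^{(\lambda)},J_{i}^{(\lambda)})$ in Theorem \ref{thm_sims_Kuzn}.

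To establish that, I would first use the Markov property of $X$ together with the independence of $T_{i}-T_{i-1}=\mathrm{e}_{i}(\lambda)$ from $\mathcal{F}_{i-1}$ and its $\mathrm{Exp}(\lambda)$ law to conclude that the pair
\[
\bigl( X_{T_{i}}-X_{T_{i-1}},\ \sup_{T_{i-1}\le s\le T_{i}}(X_{s}-X_{T_{i-1}})\bigr)
\]
is independent of $\mathcal{F}_{i-1}$ and distributed as $(X_{\mathrm{e}(\lambda)},\overline{X}_{\mathrm{e}(\lambda)})$. Next I would invoke the Wiener-Hopf identity (\ref{WH}): this pair has the same law as $(S+I,S)$, where $S\eqDistr \overline{X}_{\mathrm{e}(\lambda)}$ and $I\eqDistr \underline{X}_{\mathrm{e}(\lambda)}$ are independent. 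On a suitably enlarged probability space I can therefore realise the pair as $(S_{i}^{(\lambda)}+I_{i}^{(\lambda)},S_{i}^{(\lambda)})$, where $(S_{i}^{(\lambda)},I_{i}^{(\lambda)})$ is independent of $\mathcal{F}_{i-1}$ and of $(S_{j}^{(\lambda)},I_{j}^{(\lambda)})_{j<i}$. Combining with the elementary identity $\overline{X}_{T_{i}}=\max\{\overline{X}_{T_{i-1}},\, X_{T_{i-1}}+\sup_{T_{i-1}\le s\le T_{i}}(X_{s}-X_{T_{i-1}})\}$ yields the recursion
\[
X_{T_{i}}\eqDistr X_{T_{i-1}}+S_{i}^{(\lambda)}+I_{i}^{(\lambda)},\qquad \overline{X}_{T_{i}}\eqDistr \max\bigl\{\overline{X}_{T_{i-1}},\,X_{T_{i-1}}+S_{i}^{(\lambda)}\bigr\},
\]
jointly with all previously constructed coordinates. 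This matches precisely the $(V,J)$-recursion of Theorem \ref{thm_sims_Kuzn}; iterating from $(X_{T_{0}},\overline{X}_{T_{0}})=(0,0)=(V_{0}^{(\lambda)},J_{0}^{(\lambda)})$ closes the induction.

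The main obstacle is really a bookkeeping issue rather than a deep one: Theorem \ref{thm_sims_Kuzn} provides only the marginal equality at index $n$, whereas here I need joint equality of the whole vector. The relevant strengthening is exactly the conditional independence of the ``Wiener-Hopf pair at step $i$'' from $\mathcal{F}_{i-1}$ stated above, which in turn rests on two facts already built into the construction: the Markov property of $X$, and the fact that the grid increments $\mathrm{e}_{i}(\lambda)$ are i.i.d.\ and independent of $X$. Once these are in place, the inductive propagation from joint equality up to index $i-1$ to joint equality up to index $i$ is essentially automatic, and no additional analytic input is required.
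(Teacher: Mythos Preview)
Your proposal is correct and follows essentially the same approach as the paper's proof: induction on $k$, with the inductive step driven by the stationary independent increments of $X$ (together with independence of the exponential grid) to peel off the last segment, and the Wiener--Hopf identity (\ref{WH}) to rewrite that segment as $(S_i^{(\lambda)}+I_i^{(\lambda)},S_i^{(\lambda)})$. Your framing in terms of Markov chains with a common transition kernel and the explicit $\sigma$-algebras $\mathcal{F}_i$ is a slightly more structured packaging of the same argument the paper gives via a direct chain of distributional equalities.
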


\begin{proof} This is a straightforward adaptation of the proof of Theorem 1 in Kuznetsov et al.  \cite{Kuznetsov10c}, we include it here for completeness.
The proof is by induction over $k$. It is trivially true for $k=1$ on account of (\ref{WH}). Let $k \geq 2$. We have, where $Y$ is an independent copy of $X$ and we use the notation $\overline{X}_{s_1,s_2} := \sup_{s_1 \leq u \leq s_2} X_u$:
\begin{align*}
& \left( (X_{g(0,\lambda)},\overline{X}_{g(0,\lambda)}), \ldots, (X_{g(k,\lambda)},\overline{X}_{g(k,\lambda)}) \right) \\ 
&= \left( (X_{g(0,\lambda)},\overline{X}_{g(0,\lambda)}), \ldots, (X_{g(k-1,\lambda)},\overline{X}_{g(k-1,\lambda)}), \left( X_{g(k,\lambda)},\max \left\{ \overline{X}_{0,g(k-1,\lambda)}, \overline{X}_{g(k-1,\lambda),g(k,\lambda)} \right\} \right) \right) \\
&\eqDistr \bigl( ( X_{g(0,\lambda)},\overline{X}_{g(0,\lambda)}), \ldots, (X_{g(k-1,\lambda)},\overline{X}_{g(k-1,\lambda)}), \\
& \quad \quad \quad \quad \quad \quad \left( X_{g(k-1,\lambda)}+Y_{\mathrm{e}(\lambda)},\max \left\{ \overline{X}_{0,g(k-1,\lambda)}, X_{g(k-1,\lambda)}+\overline{Y}_{\mathrm{e}(\lambda)} \right\} \right) \bigr) \\
&\eqDistr \left( (V_0^{(\lambda)},J_0^{(\lambda)}), \ldots, (V_{k-1}^{(\lambda)},J_{k-1}^{(\lambda)}), \left( V_{k-1}^{(\lambda)}+S_k^{(\lambda)}+I_k^{(\lambda)}, \max \left\{ J_{k-1}^{(\lambda)}, V_{k-1}^{(\lambda)}+S_k^{(\lambda)} \right\} \right) \right) \\
&= \left( (V_0^{(\lambda)},J_0^{(\lambda)}), \ldots, (V_{k-1}^{(\lambda)},J_{k-1}^{(\lambda)}), (V_{k}^{(\lambda)},J_{k}^{(\lambda)}) \right),
\end{align*}
where the second equality uses that $X$ has stationary, independent increments and the third equality uses (\ref{WH}) together with the induction hypothesis and the definition of the sequences $(S_i^{(\lambda)})_{i \geq 1}$ and $(I_i^{(\lambda)})_{i \geq 1}$.
\end{proof}

There is yet another heuristic justification to support the skeleton $\{X_{g(k,n/t)}\}_{k\geq0}$ 
as a good random walk approximation of the L\'evy process to compute pathwise quantities. It can be inferred from Doney \cite{Doney04} that, for all $k>0$, the random variables
\begin{equation*}
M_{k}:=\sup_{g(k,n/t)\leq t<g(k+1,n/t)}X_{t}
\quad
\text{ and }
\quad
m_{k}:=\inf_{g(k,n/t)\leq t<g(k+1,n/t)}X_{t}
\end{equation*}
can be written as
\begin{equation*}
M_{k}=S_{0}^{(n/t)}+Y_{k}^{(+)}
\quad
\text{ and }
\quad
m_{k}=I_{0}^{(n/t)}+Y_{k}^{(-)}\ ,
\end{equation*}
where $\{Y_{k}^{(+)}\}_{k\geq0}$ and $\{Y_{k}^{(-)}\}_{k\geq0}$ are random walks with the same distribution as $\{X_{g(k,n/t)}\}_{k\geq0}$ and independent of $S_{0}^{(n/t)}$ and $I_{0}^{(n/t)}$ respectively. Since it is clear that
\begin{equation*}
m_{k}\leq X_{t}\leq M_{k}\quad \text{ for }\quad g(k,n/t)\leq t<g(k+1,n/t)\ ,
\end{equation*}
the derivations in Doney \cite{Doney04} assert that it is possible to `stochastically' bound the path of $X$ from above and below by two random walks which are equal in distribution to the skeleton constructed in Proposition 
\ref{random_walk_approx}, but with different random starting points. Heuristically, 
the random walk produced in Proposition \ref{random_walk_approx} should be particularly useful when used to approximate pathwise quantities of $X$.

\section{Approximate distribution of the $4$-tuple}\label{sec_proofs}
\setcounter{equation}{0}

Let us now show how the setup introduced in the above Section \ref{WH_scheme} can be used to generate an approximate distribution of the 4-tuple as well. The idea is to approximate $\tau_u$ by finding points on our 'stochastic grid' (\ref{def_grid}) enclosing it, i.e. $k(n) \in \mathbb{N}$ such that $g(k(n)-1,n/t) \leq \tau_u \leq g(k(n),n/t)$ for all $n \in \mathbb{N}$, and evaluate the functionals involving overshoots and undershoots using these grid points.

\begin{theorem}\label{thm_sims} Let $X$ be any L\'evy process. Fix some $t>0$ and $u>0$. Recall $V$ and $J$ as defined in Theorem \ref{thm_sims_Kuzn}. Set for all $n \in \mathbb{N}$
\[ \kappa^{(n)}_u := \inf \{ k \in \{0,\ldots,n\} \, | \, J_k^{(n/t)}>u \} \]
(where as usual we understand $\inf \emptyset=\infty$). Then we have as $n \to \infty$
\begin{multline}\label{eq_conv_d}
\left( \frac{t}{n} (\kappa^{(n)}_u \wedge n), V_{\kappa^{(n)}_u \wedge n}^{(n/t)}-u, u-V_{(\kappa^{(n)}_u-1) \wedge n}^{(n/t)}, u-J_{(\kappa^{(n)}_u-1) \wedge n}^{(n/t)} \right)  \\
\convDistrL \left( \tau_u \wedge t, X_{\tau_u \wedge t}-u, u-X_{(\tau_u \wedge t)-}, u-\overline{X}_{(\tau_u \wedge t)-} \right)\ .  
\end{multline}
\end{theorem}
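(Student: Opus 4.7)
The plan is to transfer everything to the original probability space on which $X$ and the Poisson grid $\{g(k, n/t)\}_{k \geq 0}$ live jointly, and to prove the stronger almost-sure convergence there. By Proposition~\ref{random_walk_approx}, setting $K(n) := \inf\{k \in \{0,\dots,n\} : \overline{X}_{g(k,n/t)} > u\}$, the left-hand side of (\ref{eq_conv_d}) has the same joint distribution as
\[
\Bigl(\tfrac{t}{n}(K(n) \wedge n),\ X_{g(K(n)\wedge n,\, n/t)} - u,\ u - X_{g((K(n)-1)\wedge n,\, n/t)},\ u - \overline{X}_{g((K(n)-1)\wedge n,\, n/t)}\Bigr).
\]
Since almost-sure convergence implies convergence in distribution, it suffices to show this new tuple converges a.s.\ to the right-hand side of (\ref{eq_conv_d}).

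The two technical tools needed are: (a) the strong law of large numbers for the partial sums defining $g(k, n/t)$, which (the grid being independent of $X$) yields in particular $g(n, n/t) \to t$ a.s.\ and, more generally, $\frac{t}{n} K(n) \to \tau_u \wedge t$ a.s.; and (b) a conditional Borel--Cantelli argument showing that on $\{\tau_u < \infty\}$, for every $\eps > 0$ the grid contains a.s.\ at least one point in each of $(\tau_u - \eps, \tau_u)$ and $(\tau_u, \tau_u + \eps)$ for large $n$. Tool (b) holds because, conditional on $\sigma(X)$, the count of grid points in any fixed interval of length $\eps$ is $\mathrm{Poisson}(n\eps/t)$ and $\sum_n e^{-n\eps/t} < \infty$. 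Splitting the sample space by the sign of $\tau_u - t$ then covers everything, since $\{\tau_u = t\}$ is negligible: $X$ has no fixed jumps, so $X_t = X_{t-}$ a.s., which precludes the process first strictly exceeding $u$ at the deterministic time $t$.

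On $\{\tau_u > t\}$ (which covers $\{\tau_u = \infty\}$ as well), tool (a) gives $g(n, n/t) < \tau_u$ eventually, so $K(n) > n$, both truncated indices equal $n$, and the tuple reduces to $(t, X_{g(n,n/t)} - u, u - X_{g(n,n/t)}, u - \overline{X}_{g(n,n/t)})$; this converges by the cadlag property of $(X, \overline{X})$ together with the absence of a fixed jump at $t$ to $(t, X_t - u, u - X_t, u - \overline{X}_t)$, which is precisely the right-hand side on this event. On $\{\tau_u < t\}$ one first notes that $\{\overline{X}_s > u\} = \{s \geq \tau_u\}$ (up to the single instant $\tau_u$ in the creeping case), and since grid points equal $\tau_u$ with probability zero one obtains $K(n) = \inf\{k : g(k, n/t) > \tau_u\}$ a.s.; tool (b) then gives $g(K(n), n/t) \downarrow \tau_u$ and $g(K(n)-1, n/t) \uparrow \tau_u$ strictly, tool (a) gives $K(n) \leq n$ eventually and $\frac{t}{n} K(n) \to \tau_u$, and finally the right-continuity of $X$ at $\tau_u$ together with the existence of left limits of $X$ and $\overline{X}$ at $\tau_u$ yields the four coordinate-wise convergences.

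The main obstacle is the asymmetric treatment of the limit: the first two coordinates ask for a right-continuous evaluation of $X$ at (or just after) $\tau_u$, while the last two ask for the left limits of $X$ and $\overline{X}$ strictly before $\tau_u$. One therefore needs the grid to straddle $\tau_u$ correctly, and in particular for $g(K(n)-1, n/t)$ to approach $\tau_u$ \emph{strictly} from below. Tool (b) is the key step that supplies this, and its proof is made elementary by the independence of the grid from $X$, which reduces the argument to a Poisson tail estimate conditional on the path of $X$.
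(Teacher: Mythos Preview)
Your proof is correct and takes a cleaner route than the paper's. Both arguments begin identically, by invoking Proposition~\ref{random_walk_approx} to replace the $(V,J)$ tuple by the corresponding functional of $(X,\overline X)$ evaluated along the Poisson grid. They diverge after that. The paper works with convergence \emph{in probability}: for the first coordinate it partitions $[0,t]$ into subintervals of width $<\eps/2$ and controls each piece separately, and for the remaining coordinates it combines a technical lemma on $X_{T^{(n)}}-X_T$ for one-sided random approximants $T^{(n)}$ of a random time $T$ (Lemma~\ref{tech_lemma1}) with the explicit law of the overshoot/undershoot of a compound Poisson process with exponential jumps (Lemma~\ref{lem_help}). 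You instead obtain \emph{almost-sure} convergence directly: conditionally on $X$ the grid is a rate-$n/t$ Poisson process, so the probability that no grid point lands in a fixed $\eps$-neighbourhood of $\tau_u$ is $e^{-n\eps/t}$, and Borel--Cantelli (first lemma, no independence needed) forces the two grid points straddling $\tau_u$ to converge to $\tau_u$; c\`adl\`ag regularity does the rest. This is shorter and avoids both auxiliary lemmas and the partition argument. Two small points are worth tightening: (i) for the almost-sure statements to make sense simultaneously across $n$ you should fix a single coupling of all the grids, the natural one being $g(k,n/t)=\tfrac{t}{n}\sum_{i=1}^k E_i$ for one i.i.d.\ $\mathrm{Exp}(1)$ sequence $(E_i)$, after which your ``tool (a)'' is the renewal-theoretic consequence of the SLLN for $(S_k)$; and (ii) your justification that $\{\tau_u=t\}$ is null only excludes a jump at $t$, whereas creeping at $t$ is also possible --- the clean argument is $\{\tau_u=t\}\subset\{\overline X_t=u\}$, which is null for non-degenerate $X$ (the paper's proof glosses over this same boundary case).
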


Before we prove the above main result let us prove two technical lemmas, the first of which is a well know result reproduced here for the sake of completeness and the second one is a computation which will appear in the proof of Theorem \ref{thm_sims}.

\begin{lemma}\label{lem_help} Suppose $Z$ is a compound Poisson process with only positive jumps following an exponential distribution with mean $1/\theta$. Then for any $u>0$ and $\eps \in [0,u)$ 
\[ \mathbb{P} ( Z_{\tau_u}-u > \eps) = \mathbb{P} ( u-Z_{\tau_u-} > \eps ) = e^{-\theta \eps}. \]
\end{lemma}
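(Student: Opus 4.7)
The plan is to exploit the explicit jump structure of $Z$: write $Z_t = \sum_{i \geq 1} Y_i \mathbf{1}_{\{T_i \leq t\}}$, where $\{T_i\}_{i \geq 1}$ are the arrival times of a Poisson process (giving the jump times of $Z$) and $\{Y_i\}_{i \geq 1}$ is an independent i.i.d.\ sequence of exponential$(\theta)$ jump sizes. Since $Z$ is piecewise constant and non-decreasing, we have $\tau_u = T_N$ with $N := \inf\{n \geq 1 \,|\, S_n > u\}$ where $S_n := Y_1 + \cdots + Y_n$, and consequently $Z_{\tau_u} = S_N$ and $Z_{\tau_u-} = S_{N-1}$. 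The overshoot and undershoot of $Z$ above $u$ therefore reduce to the overshoot and undershoot of the renewal sequence $\{S_n\}_{n \geq 1}$ above the level $u$.

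The key observation I would use is that $\{S_n\}_{n \geq 1}$, being the partial sums of i.i.d.\ exponential$(\theta)$ random variables, is precisely a homogeneous Poisson point process on $(0,\infty)$ with intensity $\theta$. With this viewpoint both claims become statements about the Poisson process relative to the deterministic level $u$.

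For the overshoot: $Z_{\tau_u} - u$ is the distance from $u$ to the first point of the Poisson process strictly above $u$, which by the memoryless (strong Markov) property of the Poisson process is exponential$(\theta)$, yielding $\mathbb{P}(Z_{\tau_u}-u > \eps) = e^{-\theta \eps}$. For the undershoot, note that for $\eps \in [0,u)$ the event $\{u - Z_{\tau_u-} > \eps\}$ is precisely the event that no $S_n$ lies in the interval $(u-\eps, u]$ (the case $N=1$ corresponding to $S_{N-1} = 0 < u-\eps$ is correctly included, since $0$ is not a point of the process on $(0,\infty)$). By the Poisson property this probability equals $e^{-\theta \eps}$, the interval having length $\eps$ and lying inside $(0,\infty)$ thanks to $\eps < u$.

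The only real subtlety — and thus the point to handle carefully — is the translation of the undershoot event into "no points in $(u-\eps, u]$"; this is where the assumption $\eps < u$ is needed so that the interval is contained in $(0,\infty)$ and the renewal starting from $S_0 = 0$ is correctly excluded. If one preferred a direct calculation, the same result follows by decomposing on $\{N = n\}$, using $\mathbb{P}(S_{n-1} \in dx,\, Y_n > u-x) = e^{-\theta(u-x)} \mathbb{P}(S_{n-1} \in dx)$ and summing against the renewal density $\sum_{k \geq 1} f_{S_k}(x) = \theta$ on $(0,\infty)$, but the Poisson point process argument is the most transparent route.
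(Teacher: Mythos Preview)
Your proof is correct. For the overshoot both you and the paper use the same idea: condition on which jump effects the crossing and invoke the lack-of-memory property of the exponential jump size, so that part is essentially identical.

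For the undershoot your route is genuinely cleaner than the paper's. The paper merely says to condition on the event $\{N=n\}$ and ``use that the undershoot is the difference between $u$ and the sum of $n-1$ independent exponentially distributed random variables'', without spelling out why this yields $e^{-\theta\eps}$; completing that argument requires either summing over $n$ against the Erlang densities or, equivalently, invoking the renewal density identity $\sum_{k\geq 1} f_{S_k}(x)=\theta$ that you mention at the end. Your observation that $\{S_n\}_{n\geq 1}$ is a homogeneous Poisson point process of intensity $\theta$ on $(0,\infty)$ turns the undershoot claim into the single statement ``no points in $(u-\eps,u]$'', which immediately gives $e^{-\theta\eps}$. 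This bypasses the sum over $n$ entirely and makes transparent exactly where the hypothesis $\eps<u$ is used (so that the interval lies in $(0,\infty)$ and the deterministic starting point $S_0=0$ is excluded). Both approaches are of course equivalent in content, but yours is the more self-contained and arguably the more illuminating of the two.
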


\begin{proof} Condition on the event that first passage over $u$ happens at the $n$-th jump, this jump is now from an independent random level below $u$ over $u$ and follows an exponential distribution. Hence, using the lack of memory property the result for the overshoot follows. For the undershoot $u-Z_{\tau_u-}$, one can condition on the same event and use that the undershoot is the difference between $u$ and the sum of $n-1$ independent exponentially distributed random variables to conclude the proof. 
\end{proof}

\begin{lemma}\label{tech_lemma1}
Let $X$ be a L\'evy process, $T$ a random time, $(A^{(n)})_{n \geq 1}$ a sequence of events and $(T^{(n)})_{n \geq 1}$ a sequence of random times such that
\[ \mathbf{1}_{A^{(n)}} (T^{(n)}-T) \convProbL 0 \ \mbox{ as $n \to \infty$.} \]
\begin{itemize}
\item[(i)] If for all $n$ we have $T^{(n)} \geq T$ on $A^{(n)}$ a.s., then 
\begin{equation}\label{8dec2}
\mathbf{1}_{A^{(n)}} (X_{T^{(n)}} - X_{T}) \convProbL 0\ \mbox{ as $n \to \infty$.}
\end{equation}
\item[(ii)] If for all $n$ we have $T^{(n)} < T$ on $A^{(n)}$ a.s., then
\begin{equation}\label{8dec3}
\mathbf{1}_{A^{(n)}} (X_{T^{(n)}} - X_{T-}) \convProbL 0\ \mbox{ and }\ \mathbf{1}_{A^{(n)}} (\overline{X}_{T^{(n)}} - \overline{X}_{T-}) \convProbL 0\ \mbox{ as $n \to \infty$.}
\end{equation}
\end{itemize}
\end{lemma}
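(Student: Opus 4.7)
The hypothesis gives convergence in probability, not a.s., so the natural device is the \emph{subsequence principle}: a sequence converges to $0$ in probability iff every subsequence admits a further subsequence along which convergence is almost sure. To prove (\ref{8dec2}) (resp.\ (\ref{8dec3})), I take an arbitrary subsequence and extract a further subsequence $(n_j)$ along which $\mathbf{1}_{A^{(n_j)}}(T^{(n_j)}-T) \to 0$ almost surely, and then argue pointwise on the full-measure set $\Omega_0$ where this a.s.\ convergence holds.

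Fix $\omega\in\Omega_0$. I would split into two subcases according to the behaviour of the indicators $\mathbf{1}_{A^{(n_j)}}(\omega)$. If $\omega\in A^{(n_j)}$ for only finitely many $j$, the prefactor kills the quantity of interest for all large $j$ and there is nothing to prove. Otherwise, thinning to the infinite sub-subsequence on which $\mathbf{1}_{A^{(n_j)}}(\omega)=1$, the displayed a.s.\ limit reads simply $T^{(n_j)}(\omega)-T(\omega)\to 0$, and the one-sided constraint from the hypothesis is preserved: $T^{(n_j)}(\omega)\geq T(\omega)$ in case (i), or $T^{(n_j)}(\omega)<T(\omega)$ in case (ii).

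For case (i), right-continuity of the cadlag path $s\mapsto X_s(\omega)$ directly yields $X_{T^{(n_j)}}(\omega)\to X_{T(\omega)}(\omega)$, irrespective of whether $T(\omega)$ happens to be a jump time of $X$. For case (ii), since $T^{(n_j)}(\omega)$ lies strictly to the left of $T(\omega)$ and tends to $T(\omega)$, the definition of the left limit of a cadlag function gives $X_{T^{(n_j)}}(\omega)\to X_{T(\omega)-}(\omega)$; the analogous statement for $\overline{X}$ follows because the running supremum is nondecreasing and hence left-continuous, so $\overline{X}_{s}(\omega)\to\overline{X}_{T(\omega)-}(\omega)$ as $s\uparrow T(\omega)$. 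Reinstating the indicator (which is either $1$ on the relevant sub-subsequence or $0$, in which case the product vanishes) and invoking the subsequence principle closes the argument.

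The \textbf{main subtlety} is not the analytic convergence step, which in each case is an immediate consequence of the cadlag property; rather it is the bookkeeping around the indicators $\mathbf{1}_{A^{(n)}}$, which may oscillate between $0$ and $1$ along the chosen subsequence. The dichotomy between finitely many and infinitely many indices $j$ with $\omega\in A^{(n_j)}$ is precisely what allows one to convert the joint hypothesis into a clean one-sided convergence of times and thereby invoke one-sided regularity of the path.
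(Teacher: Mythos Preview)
Your argument is correct and takes a genuinely different route from the paper. The paper proves (i) by a direct probability estimate: for each $\eps>0$ and auxiliary $\eps'>0$ it splits on whether $T^{(n)}\in[T,T+\eps']$ or $T^{(n)}>T+\eps'$, bounding the first contribution by $\mathbb{P}\bigl(\sup_{s\in[0,\eps']}|X_{T+s}-X_T|>\eps\bigr)$ (which is small for small $\eps'$ by right continuity of the sample paths) and the second by $\mathbb{P}\bigl(\mathbf{1}_{A^{(n)}}(T^{(n)}-T)>\eps'\bigr)$ (which vanishes as $n\to\infty$ by hypothesis); part (ii) is then dismissed as analogous, using the process of left limits. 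You instead invoke the subsequence principle to reduce to almost sure convergence of $\mathbf{1}_{A^{(n_j)}}(T^{(n_j)}-T)$ along a sub-subsequence, and then argue entirely pathwise, handling the possible oscillation of the indicator by the finitely-many/infinitely-many dichotomy. Both arguments are short and standard; the paper's is marginally more economical in that it avoids the subsequence extraction, while yours makes the role of the one-sided constraint and of the cadlag property perhaps more transparent at the level of a single sample path.

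One small correction: the implication ``nondecreasing and hence left-continuous'' is false---$\overline{X}$ can jump precisely when $X$ makes a positive jump to a new maximum. What you actually need (and in fact use, since you write $\overline{X}_s(\omega)\to\overline{X}_{T(\omega)-}(\omega)$ rather than $\overline{X}_{T(\omega)}(\omega)$) is only that $\overline{X}$ has left limits, which does follow from monotonicity (or from $\overline{X}$ being cadlag). Replacing ``left-continuous'' by ``has left limits'' fixes the sentence without touching the rest of the argument.
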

\begin{proof}
For (\ref{8dec2}), fix some $\eps>0$ and let $\eps'>0$, then we have
\begin{align*}
\mathbb{P}( \mathbf{1}_{A^{(n)}} | X_{T^{(n)}}-X_{T} |>\eps ) &= \mathbb{P}( \mathbf{1}_{A^{(n)}} (\mathbf{1}_{\{ T^{(n)} \in [T,T+\eps'] \}} + \mathbf{1}_{\{ T^{(n)} >T+\eps'] \}}) | X_{T^{(n)}}-X_{T} |>\eps ) \\
&\leq \mathbb{P}\left( \sup_{s \in [0,\eps']} |X_{T+s}-X_{T}| >\eps \right) + \mathbb{P}( \mathbf{1}_{A^{(n)}} (T^{(n)} >T+\eps') ),
\end{align*}
where the second term vanishes as $n \to \infty$ by assumption, while the first can be made arbitrarily small by choosing $\eps'$ small enough on account of right continuity and path regularity. Clearly (\ref{8dec3}) follows by similar means and left continuity of $(X_{s-})_{s \geq 0}$.
\end{proof}

\begin{proof}[Proof of Theorem \ref{thm_sims}] 
The proof goes by defining an auxiliary random vector which would be equal in distribution to the left hand side of (\ref{eq_conv_d}) and which is such that converges in probability to the right hand side of (\ref{eq_conv_d}), thus obtaining the claim of the statement. In order to define such auxiliary random vector let us first introduce the following quantities:
\begin{align*}
k_{X}^{(n)} &:= \inf \{ k \in \{0,\ldots,n \} \, | \, \overline{X}_{g(k,n/t)} >u \}\ ,\\
k_{g}^{(n)}&:=\inf \{ g(k,n/t) \, | \, g(k,n/t) > \tau_u \}\ ,\\
\sigma_+^{(n)} &:= \mathbf{1}_{\{ k_{X}^{(n)}<\infty \}} g(k_{X}^{(n)},n/t) + \mathbf{1}_{\{ k_{X}^{(n)}=\infty \}} g(n,n/t)\ , \\
\sigma_-^{(n)} &:= \mathbf{1}_{\{ k_{X}^{(n)}<\infty \}} g(k_{X}^{(n)}-1,n/t) + \mathbf{1}_{\{ k_{X}^{(n)}=\infty \}} g(n,n/t)\ .
\end{align*}
We first observe some relationships between the above variables. Note that we may write
\begin{equation}\label{8dec6}
\mathbf{1}_{\{ k_{X}^{(n)}<\infty \}} \sigma_+^{(n)} = \mathbf{1}_{\{ k_{X}^{(n)}<\infty \}} k_{g}^{(n)}\ .
\end{equation}
Indeed, on the event $\{ \overline{X}_{g(k,n/t)} \leq u \}$ we have on the one hand $\tau_u \geq g(k,n/t)$ and hence $k_{g}^{(n)}>g(k,n/t)$; on the other hand we have $k_{X}^{(n)}>k$ and hence $\sigma_+^{(n)}>g(k,n/t)$. On the event $\{ \overline{X}_{g(k,n/t)} > u \}$, since $X$ does not jump at fixed times it neither does at $g(k,n/t)$ and thus $\tau_u<g(k,n/t)$, so $k_{g}^{(n)} \leq g(k,n/t)$; moreover, $k_{X}^{(n)} \leq k$ and hence $\sigma_+^{(n)} \leq g(k,n/t)$. Summarizing up, on the event
$\{ \overline{X}_{g(k,n/t)} \leq u \}$ we have that $k_{g}^{(n)},\sigma_+^{(n)}>g(k,n/t)$ and on the event $\{ \overline{X}_{g(k,n/t)} > u \}$ we have $k_{g}^{(n)},\sigma_+^{(n)}\leq g(k,n/t)$ which proves (\ref{8dec6}) since $k_{g}^{(n)}$ and $\sigma_+^{(n)}$ can only take values on the stochastic grid. Furthermore, we have
\begin{equation}\label{8dec7}
\mathbf{1}_{\{{k}_{X}^{(n)}<\infty\}}\sigma_-^{(n)}<\mathbf{1}_{\{{k}_{X}^{(n)}<\infty\}}\tau_u\ ,
\end{equation}
since by construction $\sigma_-^{(n)} \leq \tau_u$ on the event $\{ {k}_{X}^{(n)}<\infty \}$, and $\sigma_-^{(n)} = \tau_u$ can not happen a.s.  due to the fact that $X$ and $\{g(k,n/t)\}_{k=0}^{n}$ are independent. 

Now, with these definitions we have
\begin{multline*} 
\left( \frac{t}{n} ({k}_{X}^{(n)} \wedge n), X_{\sigma_+^{(n)}}-u, u-X_{\sigma_-^{(n)}}, u-\overline{X}_{\sigma_-^{(n)}} \right) \\ 
\eqDistr \left( \frac{t}{n} (\kappa^{(n)}_u \wedge n), V_{\kappa^{(n)}_u \wedge n}^{(n/t)}-u, u-V_{(\kappa^{(n)}_u-1) \wedge n}^{(n/t)}, u-J_{(\kappa^{(n)}_u-1) \wedge n}^{(n/t)} \right), 
\end{multline*}
which follows from applying the same functional to the left and right hand side of (\ref{4dec1}) (with $\lambda=n/t$). Hence the main result follows if we show
\begin{multline}\label{main_th_aim}
\left( \frac{t}{n} ({k}_{X}^{(n)} \wedge n), X_{\sigma_+^{(n)}}-u, u-X_{\sigma_-^{(n)}}, u-\overline{X}_{\sigma_-^{(n)}} \right) \\ 
\convProbL \left( \tau_u \wedge t, X_{\tau_u \wedge t}-u, u-X_{(\tau_u \wedge t)-}, u-\overline{X}_{(\tau_u \wedge t)-} \right) \quad \mbox{as $n \to \infty$}\ , 
\end{multline}
for which in turn it is enough to show that each component on the left hand side converges in probability to its counterpart on the right hand side. This takes up the remainder of the proof.

To prove the convergence in probability of the first component in (\ref{main_th_aim}), note that since $g(n,n/t) \convAs t$ as $n \to \infty$ we have
\begin{equation*}
\mathbf{1}_{\{ {k}_{X}^{(n)}=\infty\}}\left(t-\tau_{u}\wedge t\right)
\leq \mathbf{1}_{\{ {k}_{X}^{(n)}=\infty \, , \, \tau_{u}< t\}}t
=\mathbf{1}_{\{ g(n,n/t)<\tau_{u} \, , \, \tau_{u}< t\}}t\convAsL 0\quad \mbox{as $n \to \infty$}\ .
\end{equation*}
Similarly 
\begin{equation*}
\mathbf{1}_{\{ {k}_{X}^{(n)}<\infty \, , \, \tau_u \geq t \}} \left| \frac{t}{n} {k}_{X}^{(n)}-t  \right| 
\leq \mathbf{1}_{\{ g(n,n/t)>\tau_{u} \, , \, \tau_u > t \}}t
\convAsL 0 \quad \mbox{as $n \to \infty$}\ , 
\end{equation*}
and hence it is enough to show that
\begin{equation}\label{first_component}
\mathbf{1}_{\{ {k}_{X}^{(n)}<\infty \, , \, \tau_u < t \}} \left( \frac{t}{n} {k}_{X}^{(n)} - \tau_u \right) \convProbL 0 \quad \mbox{as $n \to \infty$}\ . 
\end{equation}
To prove (\ref{first_component}), let $\eps>0$ and take $0=t_0<t_1<\ldots<t_N=t$ such that $t_i-t_{i-1}<\eps/2$ for all $i$. Then
\begin{multline}\label{first_component_ineq} 
\mathbb{P} \left( \mathbf{1}_{\{ {k}_{X}^{(n)}<\infty \, , \, \tau_u < t \}} \left| \frac{t}{n} {k}_{X}^{(n)} - \tau_u \right| >\eps \right) = \sum_{i=1}^N \mathbb{P} \left( \mathbf{1}_{\{ {k}_{X}^{(n)}<\infty \, , \, \tau_u \in [t_{i-1},t_i) \}} \left| \frac{t}{n} {k}_{X}^{(n)} - \tau_u \right| >\eps \right) \\
\leq \sum_{i=1}^N \mathbb{P} \left( t_{i-1}-\frac{t}{n} \chi^{(n)}(t_{i-1}) >\frac{\eps}{2} \right) + \mathbb{P} \left( \frac{t}{n} \chi^{(n)}(t_{i})-t_{i} > \frac{\eps}{2} \right),
\end{multline}
where for any $x \geq 0$
\[ \chi^{(n)}(x) := \inf \{ k \geq 0 \, | \, g(k,n/t)>x \}\ . \]
Fix $1\leq i\leq N$ and let $\overline{k}(n)=n(t_i+\eps/2)/t$ and $\underline{k}(n)=n(t_{i-1}-\eps/2)/t$ for $n\geq1$, then we see 
\begin{align}
\mathbb{P} \left( \frac{t}{n} \chi^{(n)}(t_{i})-t_{i} > \frac{\eps}{2} \right) 
&\leq \mathbb{P} (\chi^{(n)}(t_i) > \overline{k}(n)) = \mathbb{P} (g(\overline{k}(n),n/t)\leq t_i)\rightarrow 0 \label{aux_firs_component1}\\
\mathbb{P} \left( t_{i-1}-\frac{t}{n} \chi^{(n)}(t_{i-1}) >\frac{\eps}{2} \right)
&\leq \mathbb{P} ( \underline{k}(n)>\chi^{(n)}(t_{i-1}) ) = \mathbb{P} (t_{i-1}\leq g(\underline{k}(n),n/t))\rightarrow 0 \label{aux_firs_component2}
\end{align}
as $n\to\infty$ since the law of large numbers ensures that 
\begin{equation*}
g(\overline{k}(n),n/t)\convAsL t_i+\eps/2>t_i
\quad\text{ and }\quad
g(\underline{k}(n),n/t)\convAsL t_{i-1}-\eps/2<t_{i-1}\ .
\end{equation*}
Finally, we use (\ref{aux_firs_component1}) and (\ref{aux_firs_component2}) in (\ref{first_component_ineq}) to conclude (\ref{first_component}).

For the second component of (\ref{main_th_aim}), we use again the convergence of $g(n,n/t) \to t$ and the fact that $X$ does not jump at fixed times to note
\[ \mathbf{1}_{\{ {k}_{X}^{(n)}=\infty \}} X_{\sigma_+^{(n)}} = \mathbf{1}_{\{ \overline{X}_{g(n,n/t)} \leq u \}} X_{g(n,n/t)} \convAsL \mathbf{1}_{\{ \overline{X}_{t} \leq u \}} X_{t} = \mathbf{1}_{\{ \tau_u \geq t \}} X_t. \]
Using this with $\mathbf{1}_{\{ {k}_{X}^{(n)}<\infty \}} \convAs \mathbf{1}_{\{ \tau_u < t \}}$ it remains to show that
\[ \mathbf{1}_{\{ {k}_{X}^{(n)}<\infty \, , \, \tau_u < t \}} (X_{\sigma_+^{(n)}} - X_{\tau_u}) \convProbL 0 \quad \mbox{as $n \to \infty$}. \]
In virtue of (\ref{8dec2}) in Lemma \ref{tech_lemma1} and recalling (\ref{8dec6}) to check that 
$\sigma_+^{(n)}=k^{(n)}_{g}>\tau_{u}$ on $\{{k}_{X}^{(n)}<\infty\}$, the above limit will hold if we show that
\begin{equation*}
\mathbf{1}_{\{ {k}_{X}^{(n)}<\infty \, , \, \tau_u < t \}} (\sigma_+^{(n)}-\tau_u)
\convProbL 0 \quad \mbox{as $n \to \infty$}.
\end{equation*}
For any $\eps>0$ recall the partition of $[0,t]$ in (\ref{first_component_ineq}). Then
\begin{align}
\mathbb{P} \left( \mathbf{1}_{\{ {k}_{X}^{(n)}<\infty \, , \, \tau_u < t \}} (\sigma_+^{(n)}-\tau_u)>\eps \right) &= \sum_{i=1}^N \mathbb{P} \left( \mathbf{1}_{\{ {k}_{X}^{(n)}<\infty \, , \, \tau_u  \in [t_{i-1},t_i) \}} (\sigma_+^{(n)}-\tau_u)>\eps \right) \notag\\
 &\leq \sum_{i=1}^N \mathbb{P} \left( \upsilon^{(n)}(t_{i}) -t_{i-1}>\eps \right) \notag\\
 &= \sum_{i=1}^N \mathbb{P} \left( \upsilon^{(n)}(t_{i}) -t_{i}>\frac{\eps}{2} \right)\label{aux_second_comp}
\end{align}
where for any $x \geq 0$
\[ \upsilon^{(n)}(x) := \inf \{ g(k,n/t) \geq 0 \, | \, g(k,n/t)>x \}\ . \]
Since $\upsilon^{(n)}$ is nothing but the first passage time over $x$ by a compound Poisson process with exponential jumps with mean $t/n$ we see from Lemma \ref{lem_help} that the right hand side of (\ref{aux_second_comp}) indeed vanishes as $n \to \infty$.

For the third and forth component of (\ref{main_th_aim}), a similar argument as the one in the beginning of the previous paragraph will tell us that it is enough to show
\[ \mathbf{1}_{\{ {k}_{X}^{(n)}<\infty \, , \, \tau_u < t \}} (X_{\sigma_-^{(n)}} - X_{\tau_u-}) \convProbL 0 
\ \text{ and }\ 
\mathbf{1}_{\{ {k}_{X}^{(n)}<\infty \, , \, \tau_u < t \}} (\overline{X}_{\sigma_-^{(n)}} - \overline{X}_{\tau_u-}) \convProbL 0
\]
as $n \to \infty$. Recalling (\ref{8dec7}) to check the assumptions of (\ref{8dec3}) in Lemma \ref{tech_lemma1}, both limits above will hold if we prove 
\[ \mathbf{1}_{\{ {k}_{X}^{(n)}<\infty \, , \, \tau_u < t \}} (\sigma_-^{(n)}-\tau_u) \convProbL 0 \quad \mbox{as $n \to \infty$}. \]
For this we may apply the obvious analogue to the argument in (\ref{aux_second_comp}), with the understanding that rather than $\upsilon^{(n)}(x)-x$ we now need to make use of $$x-\sup \{ g(k,n/t)\geq0 \, | \, g(k,n/t)<x \}\ ,$$ as it is apparent from the definition of $\sigma_-^{(n)}$ and (\ref{8dec7}). However this expression is nothing but the undershoot of a compound Poisson process with exponential jumps at first passage over $x$, hence Lemma \ref{lem_help} again applies to yield the result.
\end{proof}

\begin{remark}\label{rem1}
As will be clear from the setup and the above proof, there is nothing special about the entries in the 4-tuple other than that they can be expressed as path functionals of the pair $(X,\overline{X})$. Any other functional of this type can be handled by the method as well, and by symmetry so can path functionals of the pair $(X,\underline{X})$. It is worth noting though that path functionals depending on both $\overline{X}$ and $\underline{X}$ can (in general) not be handled by the current method as there is no analogue of (\ref{WH}) for the pair $(\overline{X}_{\mathrm{e}(q)},\underline{X}_{\mathrm{e}(q)})$.
\end{remark}

\begin{remark}
One might argue that a weakness of the WHMC simulation method is the fact that we replace the fixed time $t$ by a random variable $g(n,n/t)$ and the a priori error this causes. In Ferreiro-Castilla et al. \cite{FKSS12} a comprehensive error analysis is carried out for the bivariate distribution $(X_{g(n,n/t)},\overline{X}_{g(n,n/t)})$ from where convergence rates are derived in terms of the moments of $g(n,n/t)$. For a L\'evy process with finite second moment it is deduced that $\E[(X_{g(n,n/t)}-X_{t})^{2}]=\mathcal{O}(n^{-1/2})$. Reassuringly, the WHMC simulation technique clearly outperforms `plain' Monte Carlo when the running supremum gets involved. Cf. also Subsection \ref{subsec_BM} below.
\end{remark}

\section{Convergence rate of the first passage time}\label{first_passage_sec}
\setcounter{equation}{0}

All quantities involved in (\ref{main1}) ultimately depend on the first passage time.
We now derive a convergence rate for the approximation of the first passage time in order to gain some
insight into the efficiency of the Monte Carlo scheme based on the construction in Theorem 
\ref{thm_sims}. Recall from the proof of Theorem \ref{thm_sims} that $\kappa^{(n)}_u \eqDistr {k}_{X}^{(n)}$ where ${k}_{X}^{(n)}$ lives on the same probability space as $X$.

\begin{theorem}\label{conv_rate_tau}
Using the same notation as in Theorem \ref{thm_sims}, we have
\begin{equation*}
\E\left[\left( \frac{t}{n} ({k}_{X}^{(n)} \wedge n)-\tau\wedge t\right)^{2}\right]\leq \frac{2t^{2}}{n}
\end{equation*}
\end{theorem}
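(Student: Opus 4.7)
The plan is to recognise $k_{X}^{(n)}\wedge n$ as $(N_{\tau}+1)\wedge n$, where
\[
N_{s}:=\#\{k\geq 1: g(k,n/t)\leq s\}\qquad(s\geq 0)
\]
is the Poisson counting process associated with the stochastic grid (rate $n/t$, independent of $X$). The identification follows because $X$ does not jump at the fixed times $g(k,n/t)$, so $\overline{X}_{g(k,n/t)}>u$ iff $\tau\leq g(k,n/t)$; hence $k_{X}^{(n)}=N_{\tau}+1$ whenever $N_{\tau}+1\leq n$, and the truncation absorbs the case $k_{X}^{(n)}=\infty$. Setting $K:=N_{\tau}+1$ and $\mu:=(n/t)(\tau\wedge t)\in[0,n]$, the quantity of interest equals $(t/n)(K\wedge n-\mu)$, so the theorem reduces to
\[
\E\bigl[(K\wedge n-\mu)^{2}\bigr]\leq n+1,
\]
which yields the desired bound via $(t/n)^{2}(n+1)=t^{2}/n+t^{2}/n^{2}\leq 2t^{2}/n$ for $n\geq 1$. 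I would prove this by conditioning on $X$ (so that $\tau$ and $\mu$ become deterministic while $N$ remains a rate-$n/t$ Poisson process) and splitting on the sign of $\tau-t$.

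On $\{\tau\leq t\}$ we have $\mu=(n/t)\tau\leq n$, and a direct case check (separating $K\leq n$ and $K>n$) shows that $|K\wedge n-\mu|\leq |K-\mu|$ whenever $\mu\leq n$. Since $N_{\tau}$ is $\mathrm{Poisson}(\mu)$ conditionally on $X$,
\[
\E\bigl[(K\wedge n-\mu)^{2}\mid X\bigr]\leq\E\bigl[(K-\mu)^{2}\mid X\bigr]=\mathrm{Var}(N_{\tau}\mid X)+1=\mu+1\leq n+1.
\]

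On $\{\tau>t\}$ we have $\mu=n$ and $K\wedge n-n=-(n-K)_{+}$. The key observation is that the counting process $N$ is non-decreasing, so $\tau>t$ forces $K=N_{\tau}+1\geq N_{t}+1$ and hence $(n-K)_{+}\leq (n-N_{t}-1)_{+}\leq |N_{t}+1-n|$. Since $N_{t}$ is $\mathrm{Poisson}(n)$ and independent of $X$,
\[
\E\bigl[(K\wedge n-n)^{2}\mid X\bigr]\leq\E\bigl[(N_{t}+1-n)^{2}\bigr]=\mathrm{Var}(N_{t})+1=n+1.
\]
Combining the two cases and taking unconditional expectations completes the proof.

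The main obstacle is the case $\tau>t$. There the naive bound $\E[(K-\mu)^{2}\mid X]=\mu+1=(n/t)\tau+1$ blows up as $\tau\to\infty$, so one cannot simply drop the truncation $K\wedge n$. The monotonicity of $N$ is exploited precisely to reduce the second moment of $(n-K)_{+}$, a random quantity depending on the tail of $\tau$, to that of $N_{t}+1-n$, whose second moment depends only on $n$ through the Poisson variance.
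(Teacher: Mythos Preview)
Your proof is correct, and it is genuinely different from the paper's argument. Both proofs condition on $X$ (equivalently on $\tau_u$) so that the grid becomes an independent Poisson process, but the subsequent decompositions diverge.

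The paper splits according to whether $k_X^{(n)}=\infty$ or $k_X^{(n)}\leq n$. The first piece is handled by the one-sided Chebyshev inequality applied to the Poisson variable $\mathbf{N}_{\tau_u}$; the second piece is written out as an explicit Poisson sum and, in the regime $\alpha=\tau_u/t\geq 1$, is controlled via the pointwise inequality $e^{-\alpha n}(\alpha n)^i\leq e^{(1-\alpha)n}\alpha^n\,e^{-n}n^i$ for $i\leq n$, together with $e^{(1-\alpha)n}\alpha^n\leq 1$.

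Your route avoids all of this. You split on $\{\tau_u\leq t\}$ versus $\{\tau_u>t\}$. On the first event the elementary observation $|K\wedge n-\mu|\leq|K-\mu|$ (valid since $\mu\leq n$) lets you drop the truncation and read off the Poisson variance directly. On the second event, where the naive bound would blow up as $\tau_u\to\infty$, you exploit monotonicity of the counting process to replace $N_{\tau_u}$ by $N_t$ pathwise, thereby reducing everything to a $\mathrm{Poisson}(n)$ variance. This is both shorter and more transparent: it isolates exactly where the truncation matters and dispatches it with a single inequality, whereas the paper's proof needs a separate Chebyshev step for the tail event and a somewhat delicate sum manipulation for the remaining term.
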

\begin{proof}
Note that an alternative definition of ${k}_{X}^{(n)}$ in the proof of Theorem \ref{thm_sims} is
\begin{equation*}
k_{X}^{(n)} := \inf \{ k \in \{0,\ldots,n \} \, | \, g(k,n/t) >\tau_u \}
\end{equation*}
and hence, conditioned on $\tau_{u}$, $k_{X}^{(n)}$ follows a truncated Poisson distribution. Let us write $\alpha=\tau/t$ to ease the notation and denote by $\mathbf{N}:=\{\mathbf{N}_{t}\}_{t\geq0}$ a Poisson process with rate $n/t$. We then write
\begin{multline}
\frac{1}{t^{2}}\E\left[\left.\left(\frac{t}{n}(k_{X}^{(n)}\wedge n)-\tau_{u}\wedge t\right)^{2}\right|\tau_{u}\right]
=\underbrace{\E\left[\left.\left(1-\alpha\wedge 1\right)^{2}\right|\tau_{u}\right]
\prob\left(\left.k_{X}^{(n)}= \infty\right|\tau_{u}\right)}_{(I)}\\
+\ 
\underbrace{\E\left[\left.\left(\frac{k_{X}^{(n)}}{n}-\alpha\wedge 1\right)^{2}\right|\tau_{u}\right]\prob\left(\left.k_{X}^{(n)}\leq n\right|\tau_{u}\right)}_{(II)}\ .
\label{original}
\end{multline}
We note that when $\tau_{u}\geq t$ ($\alpha\geq1$) the term $(I)$ vanishes and hence
\begin{align}
(I)
&\leq\mathbf{1}_{\{\alpha<1\}}(1-\alpha)^{2}\prob\left(\left.k_{X}^{(n)}=\infty\right|\tau_{u}\right)\notag\\
&=\mathbf{1}_{\{\alpha<1\}}(1-\alpha)^{2}\prob\left(\left.\mathbf{N}_{\tau_{u}}>n\right|\tau_{u}\right)\notag\\
&\leq\mathbf{1}_{\{\alpha<1\}}(1-\alpha)^{2}\frac{\alpha n}{\alpha n+(1-\alpha)^{2}n^{2}}\leq \frac{1}{n}\ ,\label{first_leg}
\end{align}
where the last equality follows from the observation that $\mathbf{N}_{\tau_{u}}$ has a Poisson distribution with parameter $\tau_{u}n/t=\alpha n$ and the one-sided Chebyshev inequality, i.e.
\begin{equation*}
\prob(Z-\mu\geq a)\leq\frac{\sigma^{2}}{\sigma^{2}+a^{2}}
\end{equation*}
for a real random variable $Z$ with mean $\mu$ and variance $\sigma^{2}$.
We now bound the probability factor in $(II)$ by $1$ and note 
\begin{align}
(II)
&\leq
\mathbf{1}_{\{\alpha\geq1\}}
\sum_{i=1}^{n}\left(\frac{i}{n}-1\right)^{2}e^{-\alpha n}\frac{(\alpha n)^{i}}{i!}
+
\mathbf{1}_{\{\alpha<1\}}
\sum_{i=1}^{n}\left(\frac{i}{n}-\alpha\right)^{2}e^{-\alpha n}\frac{(\alpha n)^{i}}{i!}\notag\\
&\leq
\mathbf{1}_{\{\alpha\geq1\}}
e^{(1-\alpha)n}\alpha^{n}
\sum_{i=1}^{n}\left(\frac{i}{n}-1\right)^{2}e^{-n}\frac{n^{i}}{i!}
+
\mathbf{1}_{\{\alpha<1\}}
\sum_{i=1}^{\infty}\left(\frac{i}{n}-\alpha\right)^{2}e^{-\alpha n}\frac{(\alpha n)^{i}}{i!}\notag\\
&\leq
\mathbf{1}_{\{\alpha\geq1\}}
e^{(1-\alpha)n}\alpha^{n}
\frac{1}{n}
+
\mathbf{1}_{\{\alpha<1\}}
\frac{\alpha}{n}\leq \frac{1}{n}\ ,\label{second_leg}
\end{align}
where the last equality follows from $(1+x)\leq e^{x}$ for $x\geq0$.

The claim of the statement follows by noting that (\ref{first_leg}) and (\ref{second_leg}) are upper bounds independent from $\tau_{u}$ and applying the tower property in (\ref{original}).
\end{proof}

The convergence rates for the rest of the tuple seem less straightforward to derive, which is due to the fact that the error for the other entries in the tuple (\ref{tuple}) depend more heavily on the path of $X$. To briefly illustrate that this is a somewhat subtle problem, it seems that if $X$ is any L\'evy process with jumps it holds that 

\begin{equation}\label{Kevin} 
\mathbb{E} \left[ \max_{k \leq n} \left( X_{g(k,n/t)}-X_{kt/n} \right)^2 \right] 
\end{equation}
does not vanish as $n \to \infty$. (This claim is currently supported by numerical evidence only.) Indeed, if a joint realisation of the stochastic grid and of the process $X$ is such that at least one jump of $X$ occurs in the set 

\[ J_n := \bigcup_{k \leq n} (\min\{g(k,n/t),kt/n\},\max\{g(k,n/t),kt/n\}] \]
then the random variable inside the expectation in (\ref{Kevin}) is bounded below by the size of that jump squared. Hence (\ref{Kevin}) could only vanish if in the limit no jumps occur in $J_n$. However some numerical experiments conducted by the authors suggest that the Lebesgue measure of $J_n$ tends to $t$ as $n \to \infty$ which suggests this requirement is not satisfied.

Nevertheless, even though the vanishing of (\ref{Kevin}) would facilitate deriving error bounds for the other entries of the tuple in the spirit of Theorem \ref{conv_rate_tau}, this is not a necessary condition. Indeed, it could very well be the case that (\ref{Kevin}) does not vanish while the expected squared error for the remaining entries in the tuple does vanish. In Section \ref{sec_examples} some numerical results can be found, suggesting this is indeed the case and yielding empirical numerical convergence rates.

\subsection{Multilevel Monte Carlo schemes for the first passage time}

In Ferreiro-Castilla et al. \cite{FKSS12} the original WHMC simulation technique as introduced in Section \ref{WH_scheme}, i.e. to approximate quantities of the form $\E[f(X_{t},\overline{X}_{t})]$, was extended to a multilevel Monte Carlo algorithm under the assumption that $f$ is uniformly Lipschitz. In principle the same multilevel scheme presented in that paper can be used to approximate (\ref{main1}) as well, provided that $X$ has finite second moment (and provided that the random tuple approximation converges in mean square error). It is out of the scope of this paper to give the full details of such a scheme, we refer the reader to Giles \cite{Giles08} for the general theory of multilevel Monte Carlo methods and to Ferreiro-Castilla et al. \cite{FKSS12} for the specifics in the adaptation to the Wiener-Hopf scheme for L\'evy processes. However since we have derived the convergence rate for the first passage time approximation in the above Theorem \ref{conv_rate_tau} we are able to derive the convergence rate for its multilevel version as well. Its numerical performance is checked in Section \ref{sec_examples}. 

For the following discussion it is enough to consider $f:[0,t]\to\mathbb{R}$ Lipschitz with constant $1$, fix $n_{0},L\in\mathbb{N}$ and set $n_{\ell}=2^{\ell}n_{0}$ for $\ell=0,\ldots,L$. Let us write
\begin{equation*}
f^{n_{\ell}}:=f\left(\frac{t}{n_{\ell}} (\kappa^{(n_{\ell})}_u \wedge n_{\ell})\right)\ .
\end{equation*}
The multilevel Monte Carlo algorithm proposes to estimate $\E[f(\tau_{u}\wedge t)]$ by $\E[f^{n_{L}}]$ according to the right hand side of
\begin{equation}\label{MLMC}
\E[f^{n_{L}}]=\E[f^{n_{0}}]+\sum_{\ell=1}^{L}\E[f^{n_{\ell}}-f^{n_{\ell-1}}]\ .
\end{equation}
In other words, the multilevel version of a Monte Carlo estimator proposes to perform regular Monte Carlo estimators in each of the expectations in the right hand side of (\ref{MLMC}) as an approximation to $\E[f(\tau_{u}\wedge t)]$, i.e.
\begin{equation*}
\E[f(\tau_{u}\wedge t)]\approx
\widehat{f}_{\mathrm{ML}}^{\mathcal{M}(n_0,L)}:=
\frac{1}{M_0}\sum_{i=1}^{M_0}f^{n_0,(i)} +
\sum_{\ell=1}^{L} \frac{1}{M_{\ell}}\sum_{i=1}^{M_{\ell}}\left(f^{n_\ell,(i)}-f^{n_{\ell-1},(i)}\right)
\ ,
\end{equation*}
where $\mathcal{M}(n_0,L):=\{M_{\ell}\}_{\ell=0}^L$ are the Monte Carlo trials in each level and $f^{n_\ell,(i)}$ denotes the 
$i$-th sample from $f^{n_\ell}$. The following theorem describes the gain of this approach:
\begin{theorem}[{Ferreiro-Castilla et al. \cite{FKSS12}}]\label{Rob_thm}
Let $t>0$ and $n_\ell = n_0 2^\ell$, for some $\ell,n_0 \in \mathbb{N}$, suppose that there are positive constants $\alpha, \beta > 0$ with
$\alpha\geq\frac{1}{2}(\beta\wedge1)$ such that
\begin{enumerate}
\item[(i)] $|\E[f^{n_\ell}-f(\tau_{u}\wedge t)]|\lesssim n_\ell^{-\alpha}$
\item[(ii)] $\mathbb{V}(f^{n_\ell}-f^{n_{\ell-1}})\lesssim n_\ell^{-\beta}$
\item[(iii)] $\E[\mathcal{C}_{n_\ell}]\lesssim n_\ell$,
\end{enumerate}
where $\mathcal{C}_{n_{\ell}}$ represents the cost of computing a single sample of $f^{n_{\ell}}$. Then, for every $\nu\in \mathbb{R}_{>0}$, there exists a value $L$ and a sequence $\mathcal{M}(n_0,L) = \{M_{\ell}\}_{\ell=0}^L$ such that
\begin{equation*}
\E\left[\mathcal{C}\left(\widehat{f}_{\mathrm{ML}}^{\mathcal{M}(n_0,L)}\right)\right] \lesssim \nu \quad \text{and} \quad 
\E\left[\left(\widehat{f}_{\mathrm{ML}}^{\mathcal{M}(n_0,L)}-f(\tau_{u}\wedge t)\right)^{2}\right] 
\lesssim \left\{
\begin{array}{ll}
\nu^{-\frac{1}{2}}\,,&\text{if }\ \beta>1\,,\\
\nu^{-\frac{1}{2}}\log\nu\,, &\text{if }\ \beta=1\,,\\
\nu^{-\frac{1}{2+(1-\beta)/\alpha}}\,, &\text{if } \ \beta<1\,.
\end{array}
\right.
\end{equation*}
\end{theorem}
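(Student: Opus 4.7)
The plan is to run the standard multilevel Monte Carlo bias-variance analysis in the spirit of Giles \cite{Giles08}, specialised to the telescoping estimator (\ref{MLMC}). Since the telescoping construction makes $\widehat{f}_{\mathrm{ML}}^{\mathcal{M}(n_0,L)}$ an unbiased estimator of $\E[f^{n_L}]$, the bias contribution to the MSE is controlled directly by assumption (i). Using independence across levels and within each level, the MSE decomposes as
$$\E\left[\left(\widehat{f}_{\mathrm{ML}}^{\mathcal{M}(n_0,L)}-f(\tau_u\wedge t)\right)^2\right] = \sum_{\ell=0}^{L}\frac{V_\ell}{M_\ell} \;+\; \left(\E[f^{n_L}]-\E[f(\tau_u\wedge t)]\right)^2,$$
where $V_0=\mathbb{V}(f^{n_0})$ and $V_\ell=\mathbb{V}(f^{n_\ell}-f^{n_{\ell-1}})\lesssim n_\ell^{-\beta}$ for $\ell\geq 1$ by (ii), and the total expected cost is $\lesssim \sum_{\ell=0}^{L} M_\ell n_\ell$ by (iii).

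I would then fix $L$ and minimise the cost subject to a variance budget via a Lagrange multiplier argument, which yields the familiar allocation $M_\ell \propto \sqrt{V_\ell/C_\ell}$ and a resulting total cost proportional to $\bigl(\sum_{\ell=0}^{L}\sqrt{V_\ell C_\ell}\bigr)^2$ divided by the variance budget. Substituting $\sqrt{V_\ell C_\ell}\lesssim n_\ell^{(1-\beta)/2}=n_0^{(1-\beta)/2}2^{\ell(1-\beta)/2}$ produces a geometric series in $\ell$ whose behaviour dictates the stated trichotomy: the sum is bounded when $\beta>1$, grows like $L$ when $\beta=1$, and is dominated by its last term $n_L^{(1-\beta)/2}$ when $\beta<1$. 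The side condition $\alpha\geq\tfrac{1}{2}(\beta\wedge 1)$ is exactly what ensures that, in each regime, driving the bias $n_L^{-\alpha}$ below the targeted RMS scale does not force $n_L$ (and hence the cost) beyond the variance-dictated optimum.

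Finally, given a cost budget $\nu$, I would choose $L$ (equivalently $n_L$) and the $M_\ell$ jointly so that the squared bias and the variance are simultaneously pushed down as far as the constraint $\sum M_\ell n_\ell \lesssim \nu$ allows, and then invert to read off the resulting MSE as a function of $\nu$; the three cases of the series produce the three stated scalings (with a logarithmic correction in the critical case $\beta=1$). The main obstacle is the precise bookkeeping in the $\beta<1$ regime, where the bias and the variance are balanced at the same rate and the side condition $\alpha\geq\tfrac{1}{2}(\beta\wedge 1)$ must be invoked at the right point; the integer rounding of the Lagrange-optimal $M_\ell$ and the requirement that $f^{n_\ell}$ and $f^{n_{\ell-1}}$ at level $\ell$ be generated from a common underlying stochastic grid (so that the cancellation underlying (ii) is actually realised in the simulation) are routine but must be addressed.
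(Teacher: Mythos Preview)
Your sketch is the standard Giles-style MLMC complexity argument and is correct in outline. However, the paper does \emph{not} supply a proof of this theorem: it is simply quoted from Ferreiro-Castilla et al.\ \cite{FKSS12} (and ultimately rests on the general complexity theorem of Giles \cite{Giles08}), and then applied in the paragraph that follows to conclude that $\beta=1$ holds for the first passage time on account of Theorem \ref{conv_rate_tau}. So there is no proof in the paper to compare against; what you have written is essentially the argument behind the cited result, with the slight twist that here the roles of cost and accuracy are inverted (cost budget $\nu$ given, MSE bounded in terms of $\nu$), which you handle correctly.
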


In Section \ref{sec_examples} below we perform some numerical simulations for a L\'evy process $X$ belonging to the class of 
so-called meromorphic L\'evy processes (cf. Subsection \ref{beta_family}). It is inferred from Ferreiro-Castilla et al. \cite{FKSS12} that any functional applied to the random walk generated in (\ref{4dec1}) satisfies condition (iii) above. The Lipschitz assumption on $f$ and the triangle inequality together with Theorem \ref{conv_rate_tau} ensure that $\beta=1$ and therefore, up to logarithms, the multilevel Monte Carlo estimator of $f(\tau_{u}\wedge t)$ provided by the above Theorem \ref{Rob_thm} is optimal. Note that in the regime $\beta=1$ the bias does not play a role in the convergence of the algorithm. That is, the scheme proposed here used in 
a multilevel Monte Carlo estimator of the quantity $\E\left[f(\tau_{u}\wedge t)\right]$ 
is optimal.

\section{Numerical implementation}\label{sec_examples}
\setcounter{equation}{0}

In this section we discuss some examples of implementations of the method. Our aim is to show that
the method is easily implemented, has a clear advantage over a `plain' Monte Carlo approach (cf. Subsection \ref{subsec_BM}) and give some intuition about the convergence rates of the method. There are already other studies available which highlight the numerical performance of the WHMC simulation technique in terms of computation time (cf. Ferreiro-Castilla et al. \cite{FKSS12} or Schoutens and van Damme \cite{SD10}), at least for the original setup to approximate quantities depending on $(X_t,\overline{X}_t)$ for some $t>0$ (cf. Section \ref{WH_scheme}). For this reason we focus here on a more qualitative numerical analysis in order to gain further insight into how the WHMC simulation technique performs and we omit speed comparisons which ultimately depend on the particular problem in hands.
 
\subsection{The $\beta$-family}\label{beta_family}

With the exception of Subsection \ref{subsec_BM} we choose $X$ to be a member of the $\beta$-family of L\'evy processes to perform the numerical experiments. This is a subclass, like the $\theta$-processes (cf. Kuznetsov \cite{Kuznetsov10b}) and general hypergeometric L\'evy processes (cf. Kuznetsov et al. \cite{Kuznetsov10c}), of the family of so-called meromorphic L\'evy processes as recently introduced in Kuznetsov et al. \cite{Kuznetsov10}. The class of meromorphic L\'evy processes is very rich: paths of bounded and unbounded variation and both finite and infinite activity jumps can be generated. In terms of the triplet (cf. (\ref{characExpo})) a meromorphic L\'evy process can be characterized by any $\sigma, a$ and any L\'evy measure that can be written as a (possibly infinite) mixture of exponential distributions on $\mathbb{R}_{<0}$ and $\mathbb{R}_{>0}$. The name comes from the fact that their characteristic exponent $\Psi$ can be extended to a meromorphic function on $\mathbb{C}$. 

In particular, a member of the $\beta$-family has a L\'evy measure with a density $\pi$ given by
\begin{equation}\label{beta_density}
\pi(x) = \mathbf{1}_{\{ x>0 \}} c_1 \frac{e^{-\alpha_1 \beta_1 x}}{(1-e^{-\beta_1 x})^{\lambda_1}}
+\mathbf{1}_{\{ x<0 \}} c_2 \frac{e^{\alpha_2 \beta_2 x}}{(1-e^{\beta_2 x})^{\lambda_2}}   
\end{equation}
where $\alpha_i,\beta_i>0$, $c_i \geq 0$ and $\lambda_i \in (0,3)$. For our simulation method we need to know the laws of $\overline{X}_{\mathrm{e}(q)}$ and $\underline{X}_{\mathrm{e}(q)}$; they are easily obtained as follows (cf. Kuznetsov \cite{Kuznetsov09} for more details). With the usual notation $B(x,y)=\Gamma(x)\Gamma(y)/\Gamma(x+y)$ and $\psi(x)=\frac{\mathrm{d}}{\mathrm{d}x} \log \Gamma(x)$, the characteristic exponent (\ref{characExpo}) of the  L\'evy-Khintchine representation can be written as
\[ \Psi(z) = \frac{\sigma^2}{2}z^2 + \mathrm{i} \rho z - \frac{c_1}{\beta_1} B\left( \alpha_1-\frac{\mathrm{i}z}{\beta_1},1-\lambda_1 \right) - \frac{c_2}{\beta_2} B\left( \alpha_2+\frac{\mathrm{i}z}{\beta_2},1-\lambda_2 \right)+\gamma\ , \]
where 
\begin{align*}
\gamma &= \frac{c_1}{\beta_1} B\left( \alpha_1,1-\lambda_1 \right) + \frac{c_2}{\beta_2} B\left( \alpha_2,1-\lambda_2 \right)\ ,\\
\rho &= \frac{c_1}{\beta_1^2} B\left( \alpha_1,1-\lambda_1 \right) (\psi(1+\alpha_1-\lambda_1)-\psi(\alpha_1))\\
&\qquad - \frac{c_2}{\beta_2^2} B\left( \alpha_2,1-\lambda_2 \right) (\psi(1+\alpha_2-\lambda_2)-\psi(\alpha_2)) -a\ .
\end{align*}
For any $q>0$, the equation $\zeta \mapsto q+\Psi(\mathrm{i}\zeta)$ has infinitely many zeros, all real and simple, located as follows:
\begin{align*}
&\zeta_0^- \in (-\beta_1 \alpha_1,0), \, \zeta_0^+ \in (0,\alpha_2 \beta_2),\\
&\zeta_k^- \in (\beta_1 (k-\alpha_1),\beta_1 (k+1-\alpha_1)),\text{ for }  k\leq -1\\
&\zeta_k^+ \in (\beta_2(\alpha_2+k-1),\beta_2(\alpha_2+k)), \text{ for } k\geq1\ .
\end{align*}
Finally, under the above notation the characteristic functions for $\overline{X}_{\mathrm{e}(q)}$ and $\underline{X}_{\mathrm{e}(q)}$ are given by 
\begin{equation}\label{infi_chf}
\varphi_{q}^{+}(z)=\mathbb{E}[e^{iz\overline{X}_{\mathrm{e}(q)}}]
=
\prod_{n\leq0}\frac{1+\frac{iz}{\beta_{1}(n-\alpha_{1})}}{1+\frac{iz}{\zeta_{n}^{-}}}
\ ,\ \ 
\varphi_{q}^{-}(z)=\mathbb{E}[e^{iz\underline{X}_{\mathrm{e}(q)}}]
=
\prod_{n\geq0}\frac{1+\frac{iz}{\beta_{2}(n+\alpha_{2})}}{1+\frac{iz}{\zeta_{n}^{+}}}\ .
\end{equation}
In order to implement the WHMC simulation technique we need to sample from the random variables $\overline{X}_{\mathrm{e}(q)}$ and $\underline{X}_{\mathrm{e}(q)}$. This hence requires truncation of the infinite product representation in (\ref{infi_chf}). (Note that this applies to any meromorphic L\'evy process, since in that case $\varphi_{q}^{+}$ and $\varphi_{q}^{-}$ take the same infinite product form albeit with different roots and poles generated by the function $\zeta \mapsto q+\Psi(\mathrm{i}\zeta)$). For example, if we truncate $\varphi_{q}^{-}$ after $N$ factors then we effectively sample from the random variable $\underline{X}_{\mathrm{e}(q)}^{N}$ with characteristic function
\begin{equation}\label{finite_prod}
\varphi_{q,N}^{-}(z)=\mathbb{E}[e^{iz\underline{X}^{N}_{\mathrm{e}(q)}}]
=
\prod_{n\geq0}^{N}\frac{1+\frac{iz}{\beta_{2}(n+\alpha_{2})}}{1+\frac{iz}{\zeta_{n}^{+}}}\ .
\end{equation}
Samples from $\underline{X}_{\mathrm{e}(q)}^{N}$ are easily obtained using the observation that each factor of the finite product in (\ref{finite_prod}) can be rewritten as
\begin{equation*}
\frac{1+\frac{iz}{\beta_{2}(n+\alpha_{2})}}{1+\frac{iz}{\zeta_{n}^{+}}}
=
\frac{\zeta_{n}^{+}}{\beta_{2}(n+\alpha_{2})}
+\left(1-\frac{\zeta_{n}^{+}}{\beta_{2}(n+\alpha_{2})}\right)\left(1+\frac{iz}{\zeta_{n}^{+}}\right)^{-1}\ .
\end{equation*}
The above is nothing but the characteristic function of a measure consisting of an atom in zero plus a defective negative exponential distribution, \ie
\begin{equation}\label{KK}
\frac{\zeta_{n}^{+}}{\beta_{2}(n+\alpha_{2})}\delta_{0}
+\left(1-\frac{\zeta_{n}^{+}}{\beta_{2}(n+\alpha_{2})}\right)\mathrm{e}(-\zeta_{n}^{+})\ ,
\end{equation}
and it is straightforward to obtain samples according to such a measure. Hence the law of $\underline{X}_{\mathrm{e}(q)}^{N}$ can be expressed as a finite sum of independent random variables with a probability measure as in (\ref{KK}). A similar construction is valid for the supremum.
The above observation provides a very straightforward way to simulate the supremum and the infimum for general meromorphic L\'evy processes and, as far as we know, novel. Furthermore in Ferreiro-Castilla and Schoutens \cite{FS11} the mean squared error due to the truncation of the infinite product was derived:
\begin{equation*}
\E\left[\left(\underline{X}_{\mathrm{e}(q)}-\underline{X}_{\mathrm{e}(q)}^{N}\right)^{2}\right]\leq
\frac{3}{(\zeta_{N+1}^{+})^{2}}\leq\frac{3}{\beta_2^{2}(\alpha_2+N)^{2}}\lesssim
\mathcal{O}(N^{-2})\ .
\end{equation*} 

The recent literature working with meromorphic L\'evy processes typically inverts the distribution functions of $\underline{X}_{\mathrm{e}(q)}$ and $\overline{X}_{\mathrm{e}(q)}$ which are also available (cf. Kuznetsov \cite{Kuznetsov09}) for producing samples. However these distribution functions are expressed as infinite sums, hence truncation has to be applied in this approach as well. Furthermore the truncated distribution function has to be inverted numerically. Since this has to happen very efficiently this inevitably introduces additional error, and it it is not easy to analyse how such error influences the end result. The above approach does not suffer from this problem.

\subsection{First passage time of Brownian motion}\label{subsec_BM}

In this subsection we take $X$ to be a standard Brownian motion. Our goal is to present some numerical evidence for the fact that `plain' Monte Carlo (i.e., a random walk approximation of $X$ by sampling increments of $X$) yields a significant bias when it comes to approximating the first passage time, as already alluded to in the Introduction. As is well known we have $\mathbb{P}(\tau_u \leq s) = 2(1-\Phi(u/\sqrt{2s}))$ for $s>0$, furthermore both $\overline{X}_{\mathrm{e}(q)}$ and $-\underline{X}_{\mathrm{e}(q)}$ follow an exponential distribution with mean $1/\sqrt{2q}$. It is hence straightforward to implement the WHMC simulation technique and approximate $\tau_u$. Since in the WHMC simulation technique a time step requires two samples, namely one from $\overline{X}_{\mathrm{e}(q)}$ and one from $\underline{X}_{\mathrm{e}(q)}$ (cf. Section \ref{sec_proofs}) we take for the plain Monte Carlo half the step size (i.e. $1/(2n)$). 

Figure \ref{Fig1} shows a plot of $t \mapsto \mathbb{P}(\tau_2 \leq t)$ for $t \in[0,50]$. Figure \ref{Fig2} shows the errors of both plain Monte Carlo and the WHMC simulation technique. Note that also if we decrease the step size and increase the number of samples considerably for the plain Monte Carlo approximation, the corresponding error remains several factors larger than the error the WHMC simulation technique makes. 

\begin{figure}[t]
  \begin{minipage}[t]{.45\textwidth}
    \begin{center}  
      \includegraphics[width=7.5cm]{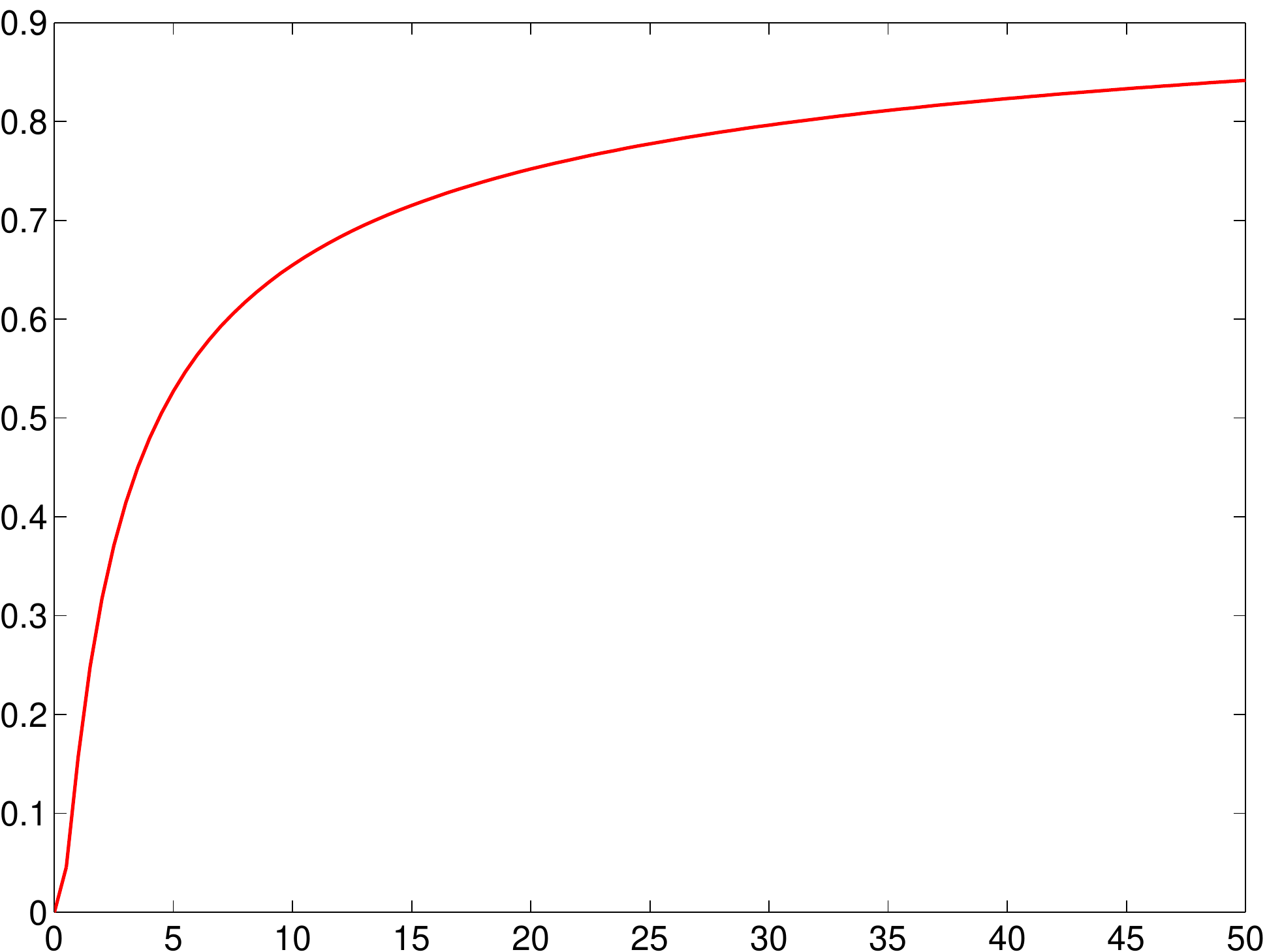}
      \caption{A plot of the cdf $t \mapsto \mathbb{P}(\tau_2 \leq t)$ for a Brownian motion $X$.}
      \label{Fig1}
    \end{center}
  \end{minipage}
  \hfill
  \begin{minipage}[t]{.45\textwidth}
    \begin{center}  
      \includegraphics[width=7.5cm]{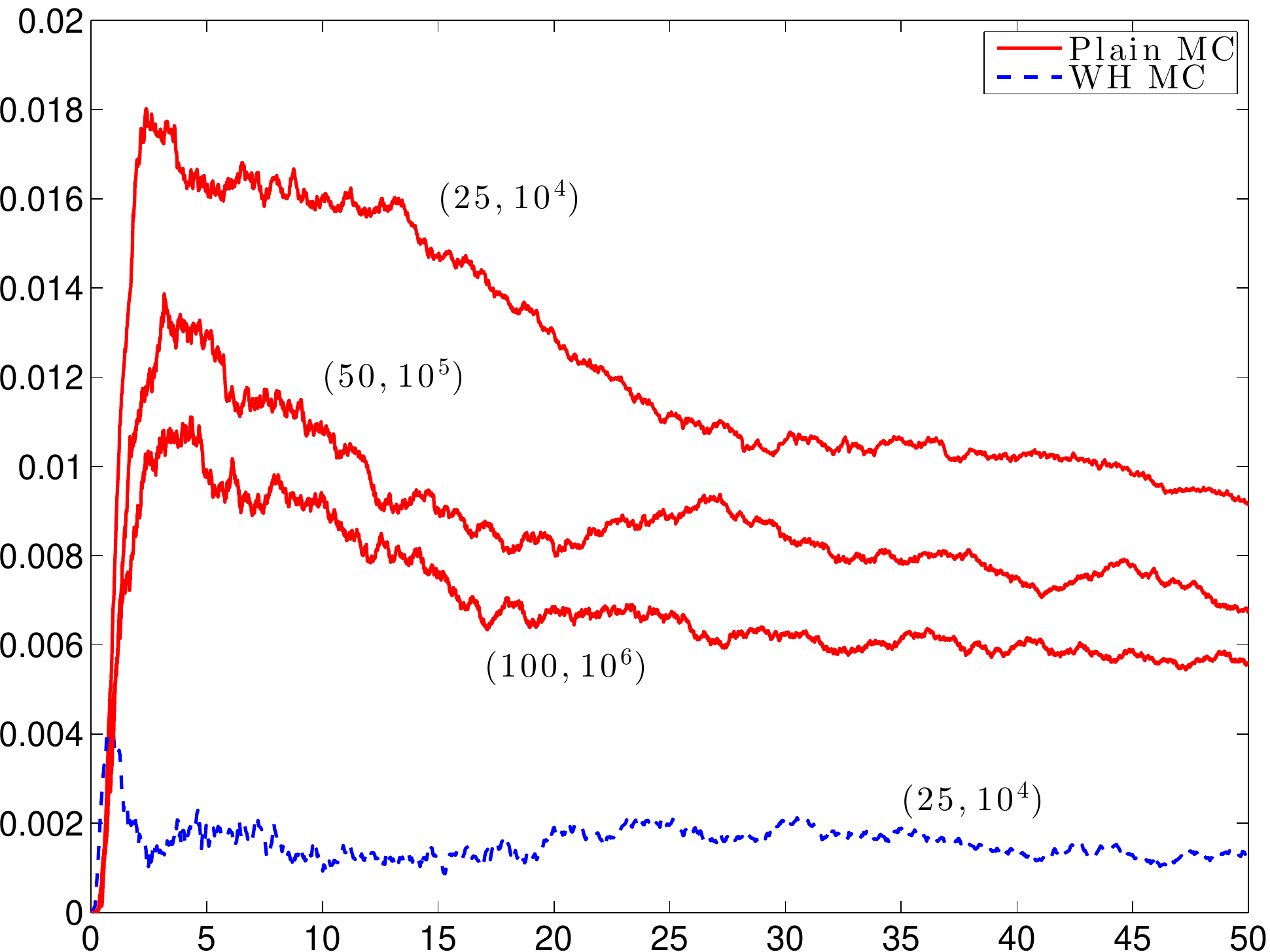}
      \caption{Absolute error in the approximation of the cdf of $\tau_2$ for plain Monte Carlo and the WHMC simulation technique. Coordinates $(n,m)$ stand for stepsize $1/n$ ($1/(2n)$ for plain Monte Carlo), and number of samples $m$.}
      \label{Fig2}
    \end{center}
  \end{minipage}
\end{figure}

\subsection{Convergence rates}\label{num_rates}

We now consider $X$ a driftless pure jump process belonging to the $\beta$-family described in Subsection \ref{beta_family}. In particular, we choose the coefficients in the L\'evy measure (\ref{beta_density})
to be $c_{i}=\beta_{i}=\lambda_{i}=1$ for $i=1,2$, $\alpha_{1}=1$ and $\alpha_{2}=2$. The chosen
values for $c_{i}$, $\beta_{i}$ and $\lambda_{i}$ make $X$ behave similar to the Variance Gamma process, a popular model in finance (see Schoutens and van Damme \cite{SD10} for the relation of the Variance Gamma process with the $\beta$-family). We choose different quantities for $\alpha_{1}$ and $\alpha_{2}$ to make the process asymmetric and ensure that a substantial quantity of the simulated paths will cross the barrier level which is set to $u=1$. We also set the monitoring time $t=1$. Variations of these parameters do not make significant changes in the following plots.

The plots in Figures \ref{Fig_rate_1} and \ref{Fig_rate_2} run a refinement sequence of Monte Carlo estimates for the mean square difference of consecutive levels of approximations for the first passage time, the overshoot, the undershoot and the last maximum before the first passage. In particular, \mbox{Figure \ref{Fig_rate_1}} depicts the expectation 
\begin{equation*}
\E\left[\left(\frac{t}{n_{\ell}} (\kappa^{(n_{\ell})}_u \wedge n_{\ell})-\frac{t}{n_{\ell-1}} (\kappa^{(n_{\ell-1})}_u \wedge n_{\ell-1})\right)^{2}\right]\ ,
\end{equation*}
for $n_{\ell}=2^{\ell}$ with $\ell=4,\ldots,10$. The plot uses 
$\log_{2}$-scales to show a clear decreasing rate of slope $1$. This convergence rate is dictated by the
result in Theorem \ref{conv_rate_tau} and the triangle inequality, hence  Figure \ref{Fig_rate_1} is a numerical evidence of Theorem \ref{conv_rate_tau}. In Figure \ref{Fig_rate_2} we proceed to repeat the same experiment for the overshoot, the undershoot and the last maximum before the first passage. We observe numerically that the decreasing rate is very close to $1/2$, i.e. 
\begin{equation*}
\E\left[\left( V_{\kappa^{(n_{\ell})}_u \wedge n_{\ell}}^{(n_{\ell}/t)}-V_{\kappa^{(n_{\ell-1})}_u \wedge n_{\ell-1}}^{(n_{\ell-1}/t)}\right)^{2}\right]\lesssim \frac{1}{n^{1/2}}\ ,
\end{equation*}
for the overshoot and analogous statements for undershoot and the last maximum before the passage. 
The above is a particular choice of a Cauchy sequence and therefore it can not prove
the convergence
\begin{equation*}
\left(V_{\kappa^{(n_{\ell})}_u \wedge n_{\ell}}^{(n_{\ell}/t)}-u\right)
\ \stackrel{{L}^{2}}{\longrightarrow}\ 
\left(X_{\tau_u \wedge t}-u\right)\ .
\end{equation*}
Nevertheless it suggests that the above convergence is plausible and indicates its possible convergence rate. The same conclusions can be derived for the overshoot and the last maximum before the first passage time according to Figure \ref{Fig_rate_2}.
\begin{figure}[t]
  \begin{minipage}[t]{.45\textwidth}
    \begin{center}  
      \includegraphics[width=7.5cm]{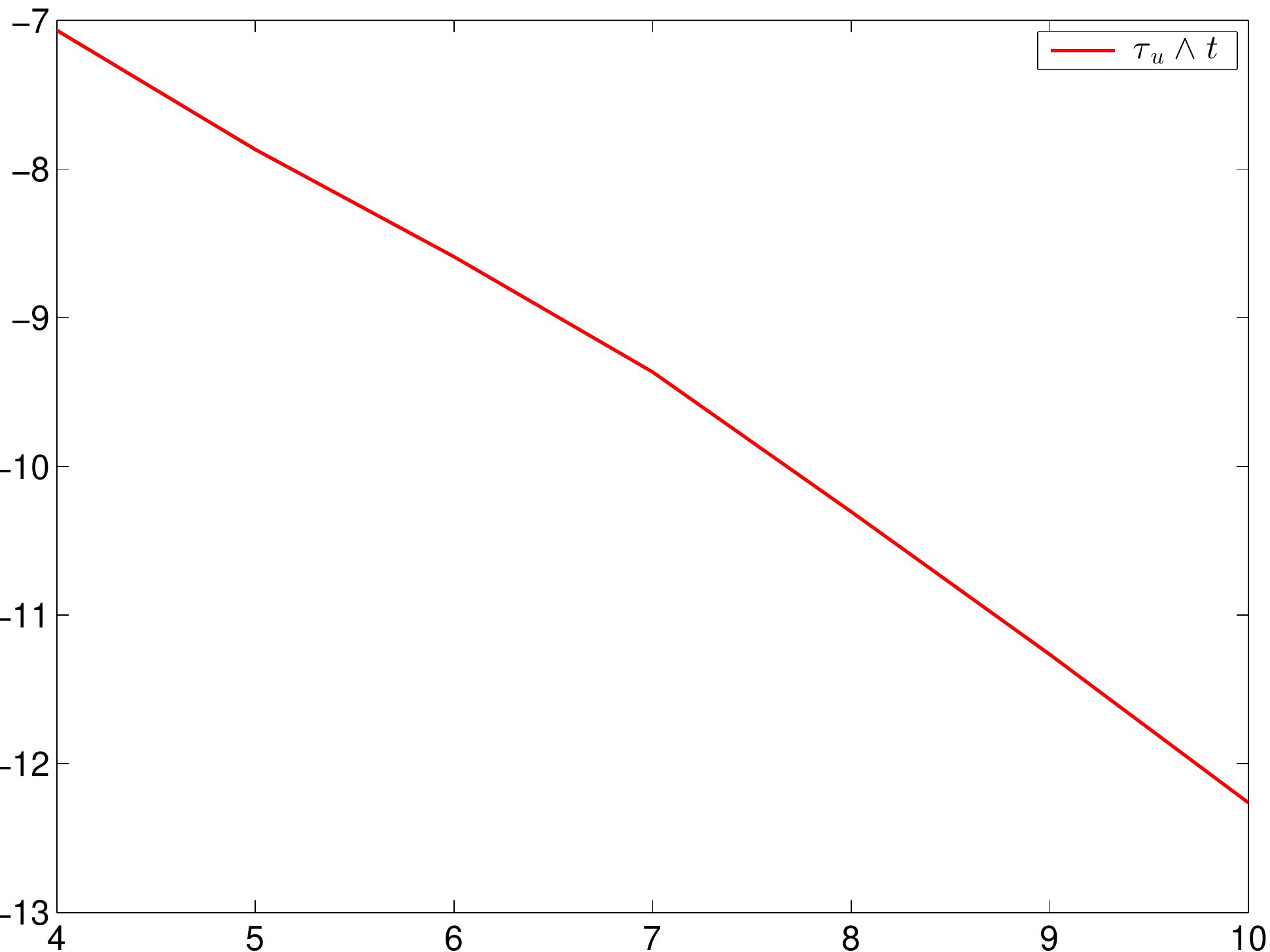}
      \caption{Mean square error for the difference in consecutive levels of the approximation for the first passage time (using $\log_{2}$-scales).}
      \label{Fig_rate_1}
    \end{center}
  \end{minipage}
  \hfill
  \begin{minipage}[t]{.45\textwidth}
    \begin{center}  
      \includegraphics[width=7.5cm]{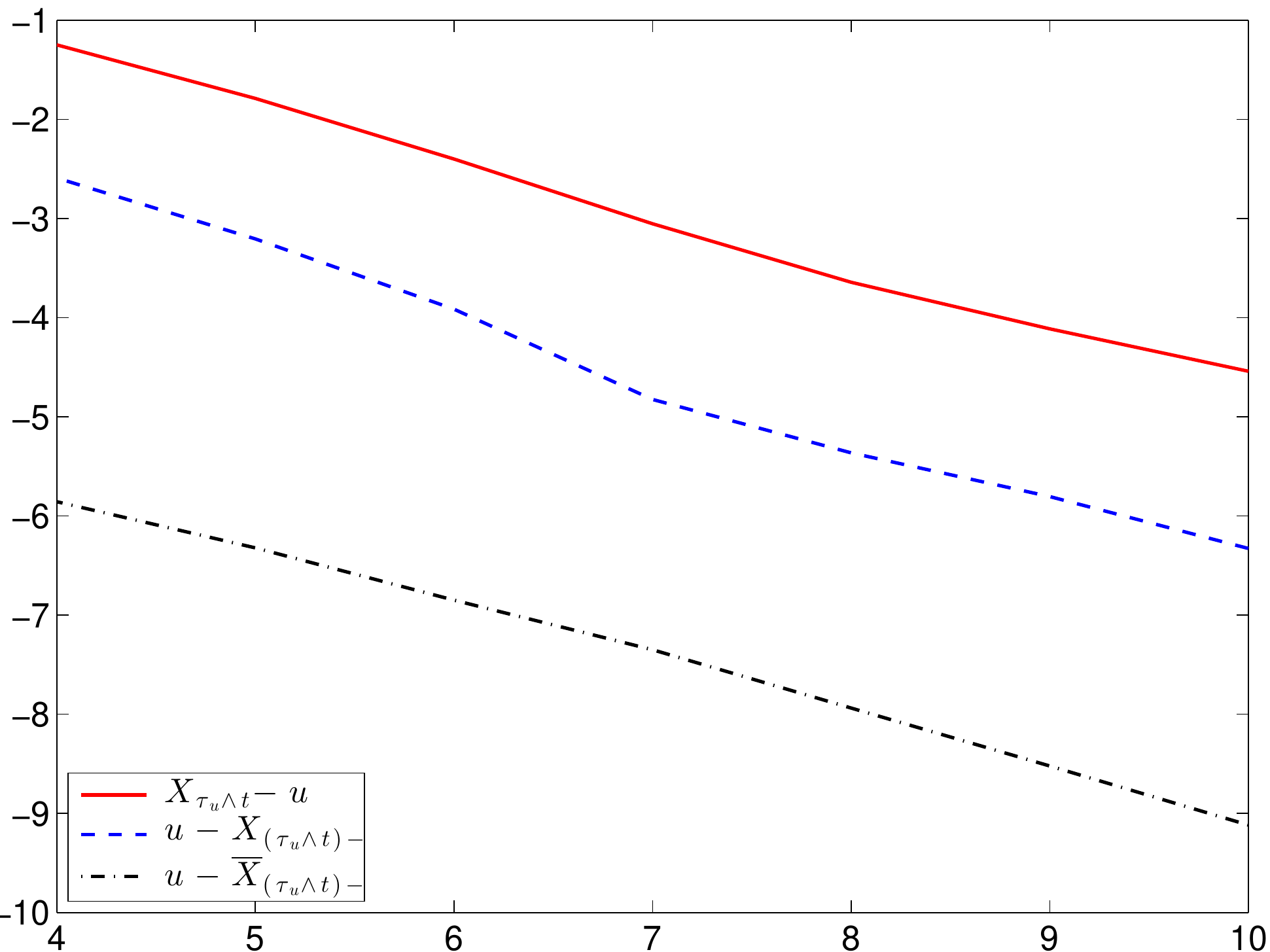}
      \caption{Mean square error for the difference in consecutive levels of the approximation for the overshoot, undershoot and last maximum before passage (using $\log_{2}$-scales).}
      \label{Fig_rate_2}
    \end{center}
  \end{minipage}
\end{figure}

\subsection{The joint law of first passage time and overshoot}\label{subsec_BetaJoint}

We conclude with a final example, namely consider 

\begin{equation}\label{K2}
v(u,y,q):=\E[e^{-q\tau_{u}}\mathbf{1}_{\{X_{\tau_{u}}-u\leq y\}}]\ .
\end{equation}
This expression depends both on the first passage time and the overshoot. Indeed it is a simplified version of the quintuple law mentioned in the Introduction, cf. (\ref{GS}). Note that the algorithm described in Section \ref{WH_scheme} will be the same for any functional $f$ in (\ref{main1}) and therefore we only provide here a simplified example of (\ref{main1}) for the sake of completeness. Note that any comprehensive numerical analysis of (\ref{main1}) will require some assumptions on the smoothness of $f$, see for instance Ferreiro-Castilla et al. \cite{FKSS12} for an error analysis of $\mathbb{E}[f(\overline{X}_{1},X_{1})]$ where $f$ is uniformly Lipschitz. Indeed, the above quantity is a simplified version of the Gerber-Shiu penalty function as used in the insurance literature (see e.g. \cite{Avram11} and the references therein).

For meromorphic L\'evy processes a closed form expression is available for $v$ --- which is a reason for choosing this example --- in terms of the roots and poles of the function $\zeta \mapsto q+\Psi(\mathrm{i}\zeta)$, cf. Theorem 3 in Kuznetsov \cite{Kuznetsov10}. We choose $X$ the same as in Subsection \ref{num_rates}. Figures \ref{Fig_F_1} and \ref{Fig_F_2} depict the performance of our algorithm as we decrease the exponential step rate $1/n$. We set the monitoring time $t=10$ to let the process cross the barrier in most of the samples. (Note that $\tau_{u}$ is defined as the first passage time when the process is left to run indefinitely, in a practical implementation such as this one we have to consider $\tau_{u}\wedge t$ rather. However due to the exponential decay, in (\ref{K2}) the effect of replacing $\tau_u$ by $\tau_{u}\wedge t$ can be neglected for $t$ large enough). Both plots in Figures \ref{Fig_F_1} and
\ref{Fig_F_2} show a similar behavior consistent with the results and convergence rates in \cite{FKSS12,FS11,SD10} --- which only approximated the joint distribution $(X_{t},\overline{X}_{t})$ --- and the results in Section \ref{num_rates}. This is a further indication that the approximation of the 4-tuple exhibits similar behaviour as the approximation of $(X_{t},\overline{X}_{t})$ derived in earlier work.

\begin{figure}[t]
  \begin{minipage}[t]{.45\textwidth}
    \begin{center}  
      \includegraphics[width=7.5cm]{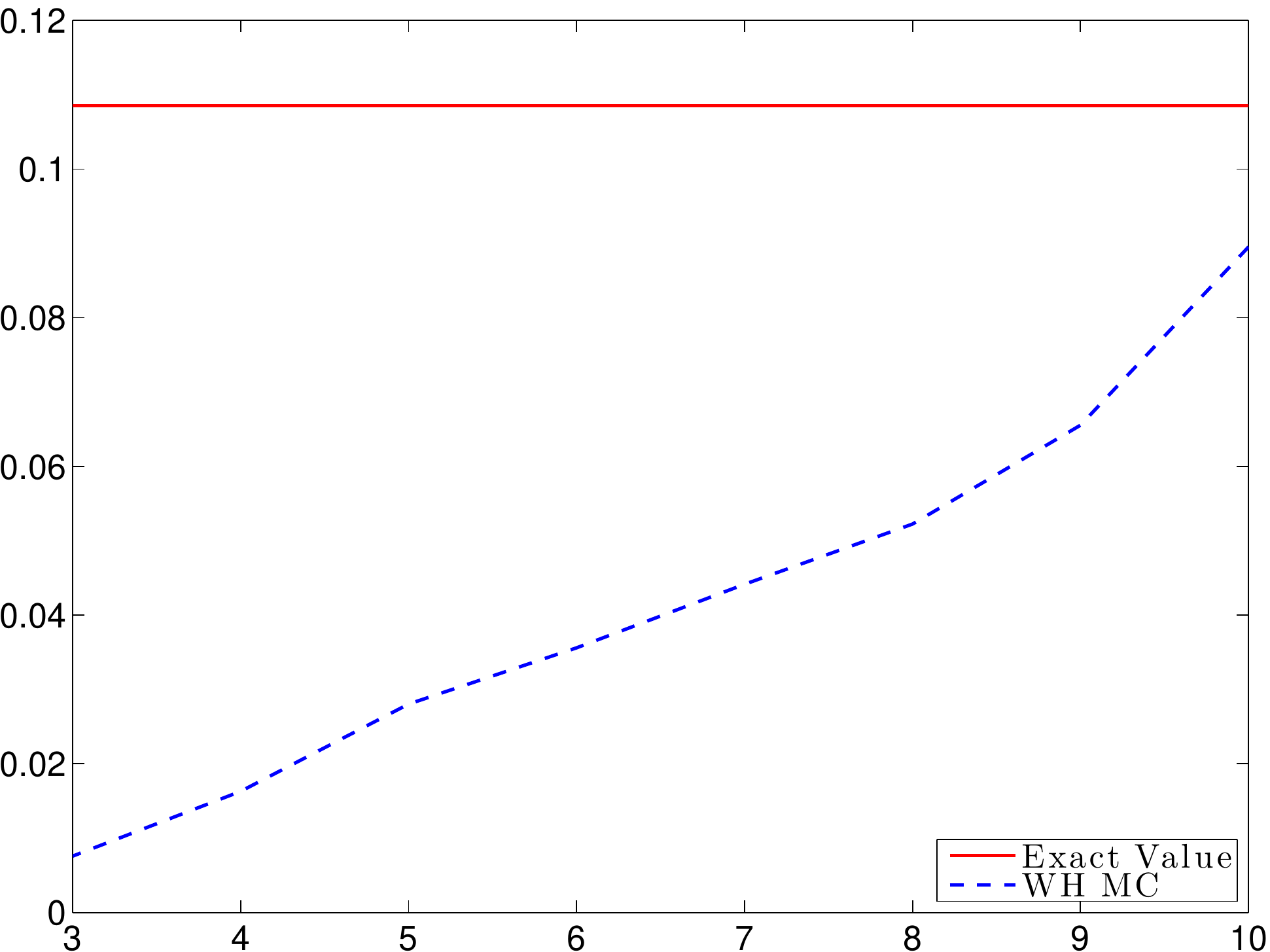}
      \caption{Approximation of $v(0.1,0.05,1)$ against rate of steps (in $\log_{2}$-scale) for the WHMC simulation method.}
      \label{Fig_F_1}
    \end{center}
  \end{minipage}
  \hfill
  \begin{minipage}[t]{.45\textwidth}
    \begin{center}  
      \includegraphics[width=7.5cm]{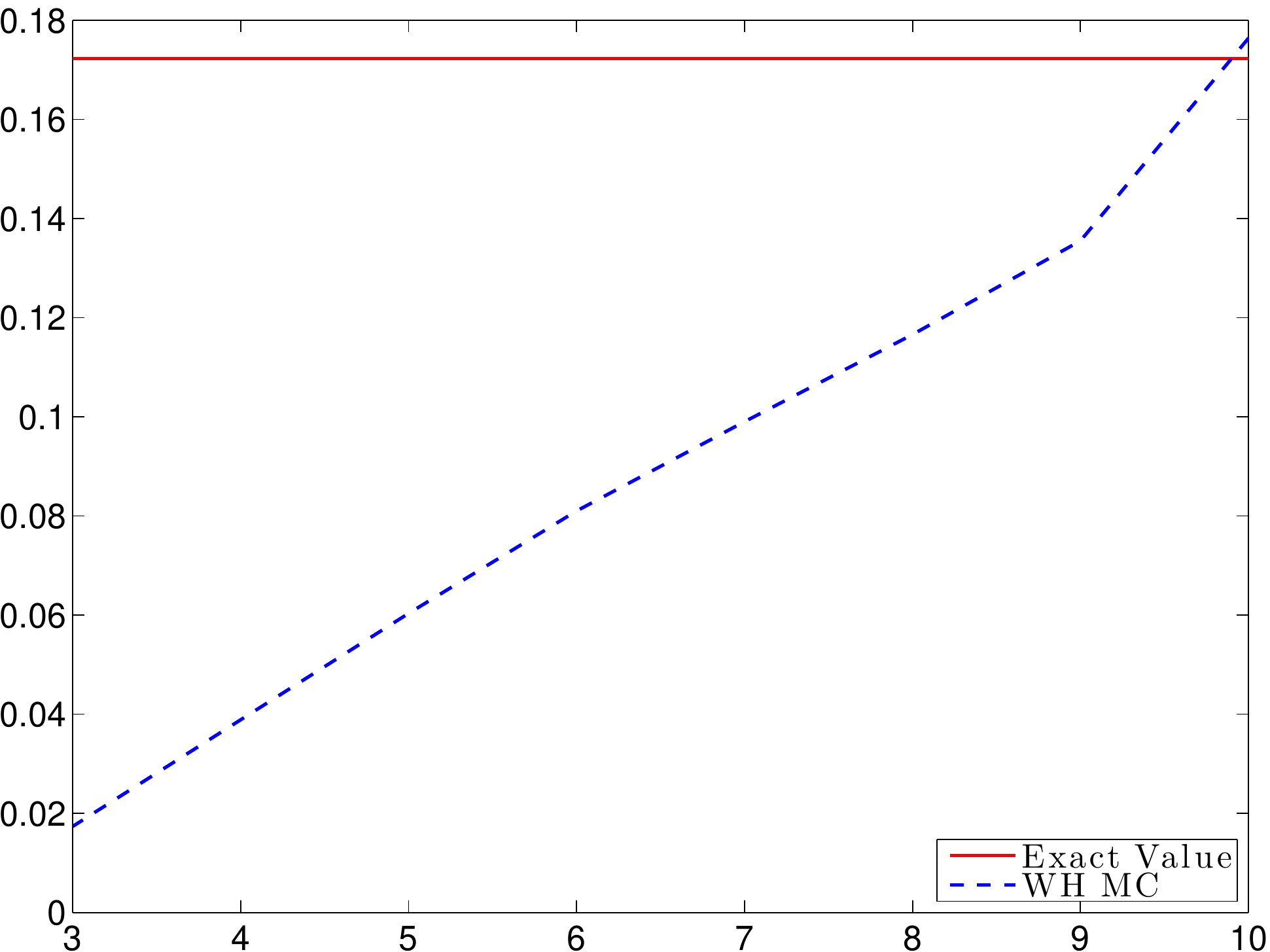}
      \caption{Approximation of $v(0.15,0.15,1)$ against rate of steps (in $\log_{2}$-scale) for the WHMC simulation method.}
      \label{Fig_F_2}
    \end{center}
  \end{minipage}
\end{figure}

\end{document}